\newcommand{\ie}{{\itshape i.e.} }
\newtheorem{theorem}{Theorem}[section]
\newtheorem{lemma}[theorem]{Lemma}
\newtheorem{proposition}[theorem]{Proposition}
\newtheorem{corollary}[theorem]{Corollary}
\theoremstyle{definition}
\theoremstyle{remark}
\newtheorem{remark}[theorem]{Remark}
\newtheorem{question}[theorem]{Question}
\numberwithin{equation}{subsection}
\numberwithin{equation}{theorem}
\DeclareMathOperator{\en}{end}
\DeclareMathOperator{\fm}{\mathfrak{m}}
\DeclareMathOperator{\fp}{\mathfrak{p}}
\DeclareMathOperator{\fa}{\mathfrak{a}}
\DeclareMathOperator{\Hom}{Hom} 
\DeclareMathOperator{\Ext}{Ext}
\DeclareMathOperator{\Tor}{Tor} 
\DeclareMathOperator{\Proj}{Proj}
\DeclareMathOperator{\Sym}{Sym}
\DeclareMathOperator{\cC}{\mathcal{C}}
\DeclareMathOperator{\cF}{\mathcal{F}}
\DeclareMathOperator{\sO}{\mathscr{O}}
\DeclareMathOperator{\sI}{\mathscr{I}}
\DeclareMathOperator{\sN}{\mathscr{N}}
\DeclareMathOperator{\reg}{reg}
\DeclareMathOperator{\PP}{\mathbb{P}}
\DeclareMathOperator{\E}{E}
\DeclareMathOperator{\beg}{beg}
\DeclareMathOperator{\Soc}{{Soc}}
\DeclareMathOperator{\sHom}{\mathscr{H}\text{\kern -3pt {\calligra\large om}}\,}
\renewcommand{\phi}{\varphi}
\renewcommand{\theta}{\vartheta}
\renewcommand{\epsilon}{\varepsilon}
\renewcommand{\to}[1][]{\xrightarrow{\ #1\ }}
\begin{document}

\title{On asymptotic socle degrees of local cohomology modules}

\author{Wenliang Zhang}
\address{Department of Mathematics, Statistics, and Computer Science, University of Illinois, Chicago, IL 60607, USA}
\email{wlzhang@uic.edu}
\begin{abstract}
Let $R$ be a standard graded algebra over a field $k$ and $I$ be a homogeneous ideal of $R$. We study the question whether there is a constant $c$ such that $\Soc(H^{j}_{\fm}(R/I^t))_{<-ct}=0$ for all $t\geq 1$ and a variation of this question. We also draw a connection between this question and the notion of gauge-boundedness in prime characteristic.
\end{abstract}

\thanks{The author is partially supported by the NSF through DMS-1752081.}
\subjclass[2020]{13D45, 14B15}
\maketitle
\section{Introduction}

Let $k$ be a field and let $R$ be a standard graded $k$-algebra, {\ie} $R=k[x_1,\dots,x_n]/\fa$ where $\fa$ is a homogeneous ideal in $k[x_1,\dots,x_n]$ that is standard graded. Let $\fm=(x_1,\dots,x_n)$ denote the homogeneous maximal ideal of $R$. Following the notation in \cite[\S13.1]{BrodmannSharpBook}, for each graded $R$-module $M$, we set \
\[\beg(M):=\inf\{i\mid M_i\neq 0\}\qquad  \qquad \en(M):=\sup\{i\mid M_i\neq 0\}.\]
We will denote the socle of $M$ by $\Soc(M)$, {\it i.e.} 
\[\Soc(M):=\{z\in M\mid \fm z=0\}=\Hom_R(R/\fm,M).\] 
When $M$ is artinian, $\Soc(M)$ is a finite-dimensional $k$-vector space and hence both $\beg(\Soc(M))$ and $\en(\Soc(M))$ are finite. When $M$ is a finitely generated graded $R$-module, the local cohomology modules $H^j_{\fm}(M)$ are artinian; consequently both $\beg(\Soc(H^j_{\fm}(M)))$ and $\en(\Soc(H^j_{\fm}(M)))$ are finite. It is clear that
\[\en(\Soc(H^j_{\fm}(M)))=\en(H^j_{\fm}(M))\]
and hence the largest socle degrees of $H^j_{\fm}(M)$ calculate the Castelnuovo-Mumford regularity of $M$:
\[\reg(M)=\max\{\en(\Soc(H^j_{\fm}(M)))+j\}.\]
Let $I$ be a homogeneous ideal of $R$. Then it is known that there exists a constant $c$ such that $\en(H^j_{\fm}(R/I^t))\leq ct$ ({\it cf.} \cite{CutkoskyHerzogTrung99, Kodiyalam2000, TrungWangLinearRegularity}). When one changes $\{I^t\}$ to other descending chains of ideals $\{I_t\}$ that are cofinal with $\{I^t\}$ ({\it e.g.} symbolic powers or Frobenius powers), then it remains an open problem whether $\en(H^j_{\fm}(R/I_t))$ can be bounded from above linearly with respect to $t$.

In this paper, we are mainly concerned with the lowest socle degree, especially the following question.
\begin{question}
\label{main question}
Let $I$ be a homogeneous ideal of $R$. Does there exist a constant $c$ such that 
\[\beg(\Soc(H^j_{\fm}(R/I^t))) \geq -ct\]
for all $j$ and all $t$?

Or, more generally, let $\{I_t\}$ be a descending chain of ideals that is cofinal with $\{I^t\}_{\geq 1}$. Is there a linear lower bound on $\beg(\Soc(H^j_{\fm}(R/I_t)))$?
\end{question}   

One ought to remark that in the second part of Question \ref{main question} `linear bound' needs to be interpreted appropriately according to $\{I_t\}_{\geq 1}$. For instance, if $I_t=I^{[p^t]}$ (the Frobenius powers in characteristic $p$), then `linear lower bound' means linear with respect to $p^t$. 

There are at least two sources of motivation behind Question \ref{main question}. One source stems from the recent results in \cite[Theorem 1.2]{BBLSZ3} and \cite[Corollary 2.6]{DaoMontano} which says that under appropriate hypothesis on $R$ and $I$ there is a constant $c$ such that $H^j_{\fm}(R/I^t)_{\leq -ct}=0$ for $j<\dim(R/I)$. This result implies that $\beg(\Soc(H^j_{\fm}(R/I_t)))\geq -ct$ for $j<\dim(R/I)$ (under the same hypothesis on $R$ and $I$). Note that, a priori, \cite[Theorem 1.2]{BBLSZ3} and \cite[Corollary 2.6]{DaoMontano} have no bearing on the top local cohomology $H^{\dim(R/I)}_{\fm}(R/I^t)$ since it is non-zero in all sufficiently negative degrees. However, Question \ref{main question} is still valid for $\Soc(H^{\dim(R/I)}_{\fm}(R/I_t))$. Similarly, without particular hypotheses on $R$ and $I$, local cohomology modules $H^j_{\fm}(R/I^t)$ may not be finitely generated and hence it may be the case that $H^j_{\fm}(R/I^t)_{\ell}\neq 0$ for all $\ell\ll 0$. Question \ref{main question} remains valid without any further hypotheses on $R$ and $I$, and can certainly be viewed as a natural extension of the main results in \cite{DaoMontano} and \cite{BBLSZ3}. The other source comes from the connection between the notion of gauge-boundedness and a linear lower bound of $\Soc(H^d_{\fm}(\omega^{[p^e]}))$ ({\it cf.} Theorem \ref{gauge same as least socle degree}). Therefore a positive answer to Question \ref{main question} will imply the gauge-boundedness (and hence the discreteness of $F$-jumping numbers for test ideals) for a large class of graded rings ({\it cf.} \S4 for details). Note that the gauge-boundedness of the full Cartier algebra remains an open question for non-Gorenstein rings ({\it cf.} \cite[Question 5.4(a)]{BlickleTestIdealspInverseMaps}).

Our first main result connects the vanishing $H^j_{\fm}(R/I^t)_{\leq -ct}=0$ for all lower local cohomology and a linear lower bound on the socle degree of the top local cohomology.

\begin{theorem}[Theorem \ref{main criterion}]
\label{main criterion in intro}
Let $R$ be a standard graded $k$-algebra over a field $k$ and let $I$ be a homogeneous ideal. Set $d=\dim(R/I)$. Assume there is a constant $c'$ such that 
\begin{equation}
\label{linear assumption on lower lc}
\beg(\Ext^i_R(k, H^j_{\fm}(R/I^t)))\geq -c't
\end{equation} 
for $j<d$, $i<d+2$ and for all $t\geq 1$. Then there exists a constant $c$ such that
\[\beg(\Soc(H^d_{\fm}(R/I^t)))\geq -ct,\] 
for all $t\geq 1$.
\end{theorem}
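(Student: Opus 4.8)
The plan is to use the Grothendieck spectral sequence of the composite $\Hom_R(k,-)\circ\Gamma_\fm$. Fix $t\ge1$ and write $M=R/I^t$. Since $\Gamma_\fm$ carries graded injective $R$-modules to graded injective $R$-modules (in particular to $\Hom_R(k,-)$-acyclics) and $\Hom_R(k,\Gamma_\fm(N))=\Hom_R(k,N)$ for every graded module $N$ (any map $k\to N$ has image killed by $\fm$), there is a convergent spectral sequence of $\ZZ$-graded $R$-modules, with homogeneous differentials of degree $0$,
\[E_2^{p,q}=\Ext^p_R\bigl(k,H^q_\fm(R/I^t)\bigr)\ \Longrightarrow\ \Ext^{p+q}_R\bigl(k,R/I^t\bigr);\]
it is bounded since $H^q_\fm(R/I^t)=0$ unless $0\le q\le d$ (recall $\dim R/I^t=d$). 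Observe that $\Soc\bigl(H^d_\fm(R/I^t)\bigr)=\Hom_R\bigl(k,H^d_\fm(R/I^t)\bigr)=E_2^{0,d}$, a finite-dimensional graded $k$-vector space because $H^d_\fm(R/I^t)$ is artinian; bounding $\beg(E_2^{0,d})$ from below is exactly the goal.

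First I would bound the abutment by a constant. Fix once and for all a graded free resolution $\cdots\to F_1\to F_0\to k$ of $k$ over $R$ with $F_i=\bigoplus_\nu R(-a_{i\nu})$, and set $C_0:=\max\bigl(\{0\}\cup\{a_{i\nu}\mid i<d+2\}\bigr)$, a finite number depending only on the first $d+1$ syzygies of $k$ over $R$. Because $I\subseteq\fm$ we have $(I^t)_0=0$, whence $\beg(R/I^t)=0$; therefore, in cohomological positions $i<d+2$, the cochain complex $\Hom_R(F_\bullet,R/I^t)$---whose $i$-th term $\bigoplus_\nu (R/I^t)(a_{i\nu})$ lives in degrees $\ge -\max_\nu a_{i\nu}\ge -C_0$---is concentrated in degrees $\ge -C_0$, and hence so is its cohomology. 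Thus
\[\beg\bigl(\Ext^i_R(k,R/I^t)\bigr)\ \ge\ -C_0\qquad\text{for all }i<d+2\text{ and all }t\ge1,\]
and consequently $\beg\bigl(E_\infty^{p,q}\bigr)\ge -C_0$ whenever $p+q<d+2$, since $E_\infty^{p,q}$ is a subquotient of $\Ext^{p+q}_R(k,R/I^t)$.

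Next I would examine the differentials meeting the corner $E_r^{0,d}$. None enters it, for the source of an incoming $d_r$ would be $E_r^{-r,\,d+r-1}$, which vanishes as $\Ext^{<0}_R(k,-)=0$. The only outgoing differential is $d_r\colon E_r^{0,d}\to E_r^{r,\,d-r+1}$, and its target is a subquotient of $E_2^{r,\,d-r+1}=\Ext^r_R\bigl(k,H^{d-r+1}_\fm(R/I^t)\bigr)$. For $r\ge d+2$ this target vanishes ($d-r+1<0$); for $2\le r\le d+1$ we have $r<d+2$ and $d-r+1<d$, so hypothesis \eqref{linear assumption on lower lc} applies and gives $\beg\bigl(E_2^{r,\,d-r+1}\bigr)\ge -c't$, hence $\beg\bigl(E_r^{r,\,d-r+1}\bigr)\ge -c't$. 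Therefore $E_{r+1}^{0,d}=\ker\bigl(d_r\colon E_r^{0,d}\to E_r^{r,\,d-r+1}\bigr)\subseteq E_r^{0,d}$ for every $r\ge2$, and $E_{d+2}^{0,d}=E_\infty^{0,d}$.

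Finally I would assemble the estimate. The chain
\[E_2^{0,d}\supseteq E_3^{0,d}\supseteq\cdots\supseteq E_{d+1}^{0,d}\supseteq E_{d+2}^{0,d}=E_\infty^{0,d}\]
exhibits $\Soc\bigl(H^d_\fm(R/I^t)\bigr)=E_2^{0,d}$ as an iterated extension whose successive quotients $E_r^{0,d}/E_{r+1}^{0,d}$ embed into $E_r^{r,\,d-r+1}$ (so have $\beg\ge -c't$ for $2\le r\le d+1$) and whose bottom term $E_\infty^{0,d}$ has $\beg\ge -C_0$. Hence every nonzero homogeneous element of $E_2^{0,d}$ has degree at least $-\max\{C_0,c'\}\,t$ (using $t\ge1$ and $C_0\ge0$), so one may take $c=\max\{C_0,c'\}$. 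I expect the only real obstacle to be the construction of the spectral sequence in the graded setting---making sure its differentials are homogeneous of degree $0$ and that $E_\infty^{0,d}$ is an honest subquotient of $\Ext^d_R(k,R/I^t)$; once that is in place, the degree bookkeeping above is routine.
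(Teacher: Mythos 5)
Your proposal is correct and follows essentially the same route as the paper: the Grothendieck spectral sequence $\Ext^i_R(k,H^j_\fm(R/I^t))\Rightarrow \Ext^{i+j}_R(k,R/I^t)$, a constant lower bound on the abutment coming from the graded free resolution of $k$ (your $C_0$ plays the role of the paper's $\alpha_d$), vanishing of incoming differentials at the corner $E_r^{0,d}$, embedding of the successive quotients $E_r^{0,d}/E_{r+1}^{0,d}$ into $E_r^{r,d-r+1}$ controlled by the hypothesis, and the resulting filtration estimate giving $c=\max\{C_0,c'\}$. The degree bookkeeping and the identification $\Soc(H^d_\fm(R/I^t))=E_2^{0,d}$ match the paper's argument, so no gap remains.
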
  

In order to apply this theorem in \S\ref{gauge-bounded}, we also prove the following asymptotic vanishing theorem.

\begin{theorem}[Theorem \ref{Cartier divisor case}]
\label{Cartier divisor case in intro}
Let $R$ be a standard graded $k$-algebra where $k$ is a field of arbitrary characteristic and $I$ be a homogeneous ideal of height one in $R$ such that $R/I$ is equi-dimensional. Assume that $R_{\fp}$ is Cohen-Macaulay and that $I_{\fp}$ is principally generated by a non-zero-divisor for each non-maximal prime ideal $\fp$ in $R$. Then there exists a constant $c$ such that
\[\beg(H^j_{\fm}(R/I^t))\geq -ct\]
for $j<\dim(R/I)$ and all $t\geq 1$. 
\end{theorem}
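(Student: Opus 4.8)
\emph{Overview of the plan.} I would first reduce, via the standard exact sequences and an induction on $t$, to a uniform linear lower bound on $\beg\bigl(H^j_\fm(I^t/I^{t+1})\bigr)$, and then establish that bound by passing to the punctured spectrum, where the height‑one/Cartier hypothesis forces the relevant coherent sheaves to be \emph{line bundles}.

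\emph{Step 1: reduction to the associated graded pieces.} From $0\to I^t/I^{t+1}\to R/I^{t+1}\to R/I^t\to 0$ and the long exact sequence in $H^\bullet_\fm$ one gets
\[
\beg\bigl(H^j_\fm(R/I^{t+1})\bigr)\ \ge\ \min\bigl(\beg H^j_\fm(I^t/I^{t+1}),\ \beg H^j_\fm(R/I^t)\bigr).
\]
Thus, once one knows $\beg H^j_\fm(R/I)>-\infty$ for $j<d$ (which follows because the hypotheses make $R/I$ generalized Cohen--Macaulay, so $H^j_\fm(R/I)$ has finite length for $j<d$) together with a uniform bound $\beg H^j_\fm(I^t/I^{t+1})\ge -c't$ for $j<d$ and all $t$, an induction on $t$ yields the theorem. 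The case $j=0$ is immediate: $H^0_\fm(I^t/I^{t+1})$ is a submodule of $I^t/I^{t+1}$, which is generated in degrees $\ge t\cdot\beg(I)\ge t$. So the real content is the uniform bound for $1\le j\le d-1$.

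\emph{Step 2: to the punctured spectrum and descent to $Y=\Proj(R/I)$.} Let $U=\Spec R\setminus\{\fm\}$. The hypothesis that $I_\fp$ is principally generated by a nonzerodivisor for each $\fp\ne\fm$ says precisely that $\mathcal{L}:=\widetilde{I}\,|_U$ is an invertible ideal sheaf, with zero scheme a Cartier divisor $Z\subseteq U$; since in addition $U$ is Cohen--Macaulay and $R/I$ is equidimensional, $Y:=\Proj(R/I)$ is Cohen--Macaulay and equidimensional of dimension $d-1$. Because $\mathcal{L}$ is invertible, $\widetilde{I^t}\,|_U=\mathcal{L}^{\otimes t}$ and $\widetilde{I^t/I^{t+1}}\,|_U\cong i_*(\mathcal{C}^{\otimes t})$, where $i\colon Z\hookrightarrow U$ and $\mathcal{C}:=\mathcal{L}|_Z$ is a \emph{line bundle} on $Z$. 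Using $H^j_\fm(N)\cong H^{j-1}(U,\widetilde{N}|_U)$ for $j\ge 2$ and $H^1_\fm(N)$ a quotient of $H^0(U,\widetilde{N}|_U)$, the problem becomes: bound $\beg H^p(Z,\mathcal{C}^{\otimes t})$ linearly in $t$ for $0\le p\le d-2$. Now $\pi\colon Z\to Y$ is a $\mathbb{G}_m$‑torsor with $\pi_*\mathcal{O}_Z=\bigoplus_{\ell\in\ZZ}\mathcal{O}_Y(\ell)$, and $\mathcal{C}$ descends to a line bundle $\mathcal{N}$ on $Y$, so $H^p(Z,\mathcal{C}^{\otimes t})=\bigoplus_{\ell}H^p(Y,\mathcal{N}^{\otimes t}(\ell))$. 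Hence everything reduces to
\[
H^p\bigl(Y,\ \mathcal{N}^{\otimes t}(\ell)\bigr)=0\qquad\text{for all }\ell<-c't,\ 0\le p\le \dim Y-1,\ t\ge 1 .
\]

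\emph{Step 3: the core vanishing.} Here local freeness of $\mathcal{N}$ is essential. By Serre duality on the Cohen--Macaulay projective scheme $Y$, and since $\mathcal{N}^{\otimes t}(\ell)$ is locally free,
\[
H^p\bigl(Y,\mathcal{N}^{\otimes t}(\ell)\bigr)^{\vee}\ \cong\ H^{\dim Y-p}\bigl(Y,\ \mathcal{N}^{-t}(-\ell)\otimes\omega_Y\bigr),\qquad \dim Y-p\ge 1 .
\]
Choose $m_0$ so large that $A:=\mathcal{N}^{-1}(m_0)$ is ample and globally generated, and $M_0$ so large that $(\omega_Y\otimes\mathcal{N}^{\,i})(M_0-m_0 i)$ has vanishing higher cohomology for all $0\le i\le\dim Y$ (Serre vanishing; only finitely many sheaves). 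Then $\omega_Y(M_0)$ is $0$‑regular with respect to $A$, so by Mumford's regularity lemma $H^{\ge 1}\bigl(Y,\omega_Y(M_0)\otimes A^{\otimes t}\bigr)=0$ for every $t\ge 0$, i.e.\ $H^{\ge 1}\bigl(Y,\mathcal{N}^{-t}(M_0+m_0 t)\otimes\omega_Y\bigr)=0$. Feeding this into the duality isomorphism gives $H^p\bigl(Y,\mathcal{N}^{\otimes t}(\ell)\bigr)=0$ for $\ell\le -(M_0+m_0 t)$, which is the desired linear bound.

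\emph{The main obstacle.} It is Step 3. A lower local cohomology module of a non‑Cohen--Macaulay ring need not vanish in all sufficiently negative degrees — for instance $H^1(\PP^2,\mathcal{I}_{\mathrm{pt}}(\ell))\ne 0$ for every $\ell<0$ — so one genuinely needs the line‑bundle property of $\mathcal{N}$ that the height‑one/Cartier hypothesis supplies, and then the refinement of passing to regularity with respect to the ample bundle $A=\mathcal{N}^{-1}(m_0)$ (rather than a naive Serre vanishing) in order to make the vanishing degree \emph{linear} in $t$. The remaining points to verify are routine: the low‑dimensional boundary cases ($d=0,1$, where the conclusion is vacuous or is the $j=0$ case), the Cohen--Macaulayness and equidimensionality of $Y$, and the exactness/base‑change compatibilities used in Step 2.
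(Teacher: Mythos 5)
Your Steps 1--2 reproduce, in slightly different language, exactly the reduction in the paper's proof of Theorem \ref{Cartier divisor case}: the hypothesis makes $\sI/\sI^2$ an invertible sheaf on $X=\Proj(R/I)$, so $\sI^t/\sI^{t+1}$ is its $t$-th tensor (= symmetric) power, $R/I$ is generalized Cohen--Macaulay (base case), and the short exact sequence $0\to \Sym^t(\sI/\sI^2)\to\sO_{X_{t+1}}\to\sO_{X_t}\to 0$ plus induction on $t$ reduces everything, via Serre duality on the Cohen--Macaulay equidimensional $X$, to a linear (in $t$) upper bound on the top non-vanishing twist of $H^{q}\bigl(X,\Sym^t(\sN)(m)\otimes\omega_X\bigr)$, $q\geq 1$, where $\sN$ is the normal line bundle. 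The only divergence is the last step: the paper simply cites \cite[Theorem 2.1]{DaoMontano} for that bound, whereas you attempt a self-contained argument via Mumford regularity with respect to $A=\mathcal{N}^{-1}(m_0)$.

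That self-contained Step 3 has a gap. Zero-regularity of $\omega_Y(M_0)$ with respect to $A$ gives, via Mumford's lemma, $H^{\geq 1}\bigl(Y,\omega_Y(M_0)\otimes A^{\otimes t}\bigr)=0$, i.e.\ the vanishing of $H^{q}\bigl(Y,\mathcal{N}^{-t}(m)\otimes\omega_Y\bigr)$ only for the single value $m=M_0+m_0t$ for each $t$; but the theorem needs it for \emph{every} $m\geq M_0+m_0t$, i.e.\ for $\omega_Y(M_0)\otimes A^{\otimes t}\otimes\sO_Y(j)$ with arbitrary $j\geq 0$. Your final sentence silently assumes that regularity with respect to $A$ also kills all further non-negative twists by the different polarization $\sO_Y(1)$. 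This is not a formal consequence of $0$-regularity with respect to $A$: twisting by a globally generated (even very ample) line bundle other than $A$ can create higher cohomology (e.g.\ $H^1(\PP^2,\Omega^1(-1))=0$ but $H^1(\PP^2,\Omega^1)\neq 0$), and the Mumford induction gives information only in the $A$-direction. To close the gap you can either (i) invoke Fujita's vanishing theorem (valid in all characteristics): there is $m_1$ with $H^{q}\bigl(Y,\omega_Y(m)\otimes P\bigr)=0$ for all $q>0$, $m\geq m_1$ and all nef line bundles $P$, and take $P=A^{\otimes t}$, which immediately gives the full ray $m\geq m_1+m_0t$; or (ii) upgrade to a two-bundle (mixed) Mumford-type regularity statement, choosing $M_0$ by Serre vanishing so that $H^{q}\bigl(Y,\omega_Y(M_0)\otimes A^{-a}(-b)\bigr)=0$ for all $q>0$ and $a+b=q$, and running the restriction induction in both directions; or (iii) simply quote \cite[Theorem 2.1]{DaoMontano}, as the paper does. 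With any of these replacements your argument is correct and coincides with the paper's.
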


As a consequence of Theorem \ref{main criterion in intro}, we prove 
\begin{theorem}[Corollary \ref{corollary on socle of top local cohomology}]
Assume $R$ and $I$ satisfy the hypothesis in any one of the following three results
\begin{enumerate}
\item Theorem \ref{Cartier divisor case in intro}, or 
\item \cite[Theorem 1.2]{BBLSZ3}, or 
\item \cite[Corollary 2.6]{DaoMontano}.
\end{enumerate} 
Then Question \ref{main question} has a positive answer for $H^{\dim(R/I)}_{\fm}(R/I^t)$, {\it i.e.} there is a constant $c$ such that
\[\beg(\Soc(H^{\dim(R/I)}_{\fm}(R/I^t)))\geq -ct\]
for all $t\geq 1$.
\end{theorem}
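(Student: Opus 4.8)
The plan is to obtain this as a direct consequence of Theorem~\ref{main criterion in intro}; all that needs to be done is to verify its hypothesis~\eqref{linear assumption on lower lc} under each of the three sets of assumptions. Write $d=\dim(R/I)$. The first step is to observe that in all three cases there is a constant $c_0$ such that $\beg\bigl(H^j_{\fm}(R/I^t)\bigr)\geq -c_0 t$ for every $j<d$ and every $t\geq 1$: under the hypotheses of Theorem~\ref{Cartier divisor case in intro} this is literally its conclusion, while under the hypotheses of \cite[Theorem 1.2]{BBLSZ3} or \cite[Corollary 2.6]{DaoMontano} one has the a priori stronger vanishing $H^j_{\fm}(R/I^t)_{\leq -c_0 t}=0$, which in particular implies the stated bound on $\beg$. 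Since $0\leq j<d$ ranges over a finite set, the constant $c_0$ may be taken independent of $j$.

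The second step is to pass from this bound to the analogous bound on $\beg$ of the $\Ext$ modules appearing in~\eqref{linear assumption on lower lc}. Fix a minimal graded free resolution $\mathfrak F_\bullet$ of $k$ over $R$ and write $\mathfrak F_i=\bigoplus_a R(-a)^{\beta_{i,a}}$, where the $\beta_{i,a}$ are the graded Betti numbers of $k$ over $R$. Because $R$ is Noetherian and standard graded, each $\mathfrak F_i$ is finitely generated, so $B:=\max\{\,a\mid \beta_{i,a}\neq 0\ \text{for some}\ 0\leq i\leq d+1\,\}$ is a finite constant depending only on $R$ and $d$ (with $B\geq 0$, since $\mathfrak F_0=R$). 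For any graded $R$-module $N$ and any $i\leq d+1$, the module $\Ext^i_R(k,N)$ is a subquotient of $\Hom_R(\mathfrak F_i,N)\cong\bigoplus_a N(a)^{\beta_{i,a}}$, so that $\beg(\Ext^i_R(k,N))\geq \beg(N)-B$. Applying this with $N=H^j_{\fm}(R/I^t)$ and combining with the first step gives, for all $j<d$, all $i<d+2$, and all $t\geq 1$,
\[\beg\bigl(\Ext^i_R(k,H^j_{\fm}(R/I^t))\bigr)\ \geq\ -c_0 t-B\ \geq\ -(c_0+B)\,t.\]
Hence~\eqref{linear assumption on lower lc} holds with the constant $c'=c_0+B$, and Theorem~\ref{main criterion in intro} then supplies a constant $c$ with $\beg(\Soc(H^d_{\fm}(R/I^t)))\geq -ct$ for all $t\geq 1$, which is the assertion.

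I do not expect a genuine difficulty at this stage: the mathematical content is entirely carried by Theorem~\ref{main criterion in intro} together with the three input results, and what remains is only the elementary degree bookkeeping of the second step. The single point to be mindful of is the finiteness of $B$ --- equivalently, that the minimal free resolution of $k$ is, in each homological degree $\leq d+1$, generated in only finitely many internal degrees --- which is automatic over a Noetherian standard graded ring; and note that no uniformity in $t$ is demanded of the resolution of $k$, since neither $k$ nor its resolution varies with $t$.
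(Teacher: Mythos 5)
Your proof is correct and is essentially the paper's own argument: the paper likewise deduces the hypothesis \eqref{linear assumption on lower lc} from the bound $\beg(H^j_{\fm}(R/I^t))\geq -c_0t$ (which holds under each of the three hypotheses) by means of Lemma \ref{beg for Ext with k}, and then invokes Theorem \ref{main criterion}. Your explicit remark that the subquotient bound $\beg(\Ext^i_R(k,N))\geq\beg(N)-B$ holds for arbitrary, not necessarily finitely generated, graded modules $N$ is a welcome clarification, since the paper states Lemma \ref{beg for Ext with k} for finitely generated modules but applies it to $H^j_{\fm}(R/I^t)$; the lemma's proof works verbatim in that generality, exactly as you argue.
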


Combining these two theorems together with our Theorem \ref{gauge same as least socle degree}, we have the following consequence on gauge-boundedness for standard graded rings with isolated non-Gorenstein points, which partially answers \cite[Question 5.4(a)]{BlickleTestIdealspInverseMaps}. 

\begin{theorem}[Theorem \ref{isolated Gorenstein gauge-bounded}]
\label{isolated Gorenstein in intro}
Let $R$ be a standard graded integral domain over an $F$-finite field $k$ of characteristic $p$. Assume that $R$ satisfies $(S_2)$ condition and that $R_{\fp}$ is Gorenstein for each non-maximal prime ideal $\fp$. Then the full Cartier algebra $\cC^R$ is gauge bounded.
\end{theorem}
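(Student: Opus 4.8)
The plan is to use Theorem~\ref{gauge same as least socle degree} to convert the gauge-boundedness of $\cC^{R}$ into a linear lower bound on the least socle degree of $H^{d}_\fm(\omega^{[p^{e}]})$, where $d=\dim R$ and $\omega^{[p^{e}]}$ denotes the $e$-th Frobenius power of a canonical ideal of $R$, and then to produce that bound by applying Theorem~\ref{Cartier divisor case in intro} and Theorem~\ref{main criterion in intro} to this canonical ideal. The preliminary reductions are routine: if $R$ is Gorenstein then $\cC^{R}$ is generated by a single Cartier operator and there is nothing to prove, so assume $R$ is not Gorenstein; we may also assume $d\ge 2$, the cases $d\le 1$ being elementary. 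Since $R$ is a domain, its graded canonical module $\omega_{R}$ is torsion-free of rank one, hence isomorphic to a homogeneous ideal $J\subsetneq R$, which is unmixed of height one. By Theorem~\ref{gauge same as least socle degree} it then suffices to find a constant $c$ with $\beg(\Soc(H^{d}_\fm(J^{[p^{e}]})))\ge -cp^{e}$ for all $e\ge 0$.

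I would next check that $(R,J)$ satisfies the hypotheses of Theorem~\ref{Cartier divisor case in intro}. For each non-maximal prime $\fp$, the hypothesis on $R$ gives that $R_{\fp}$ is Gorenstein, hence Cohen--Macaulay, and that $J_{\fp}\cong(\omega_{R})_{\fp}\cong\omega_{R_{\fp}}\cong R_{\fp}$, so $J_{\fp}$ is principally generated by a nonzerodivisor; as $J$ is also unmixed, $R/J$ is equidimensional of dimension $d-1$. (The same hypothesis makes $R$, and each $R/J^{t}$, Cohen--Macaulay on its punctured spectrum, so $H^{i}_\fm(R/J^{t})$ has finite length for $i<d-1$; this will be used below.) Theorem~\ref{Cartier divisor case in intro} now gives a constant $c_{0}$ with $\beg(H^{i}_\fm(R/J^{t}))\ge -c_{0}t$ for all $i<d-1$ and all $t\ge 1$. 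Because a graded free resolution of $k$ over $R$ has its $i$-th term generated in degrees bounded by a constant $\beta_{i}$ depending only on $R$, one has $\beg(\Ext^{i}_{R}(k,M))\ge\beg(M)-\beta_{i}$ for every graded module $M$; applying this with $M=H^{j}_\fm(R/J^{t})$ for $j<d-1$ verifies hypothesis~\eqref{linear assumption on lower lc} of Theorem~\ref{main criterion in intro} for $(R,J)$ (note $\dim(R/J)=d-1$). That theorem then yields a constant $c_{1}$ with $\beg(\Soc(H^{d-1}_\fm(R/J^{t})))\ge -c_{1}t$, and combining with the whole-module bounds for $i<d-1$ gives $\beg(\Soc(H^{i}_\fm(R/J^{t})))\ge -c_{1}t$ for all $i$ and all $t\ge 1$; this is precisely the Corollary on socle of top local cohomology applied to $I=J$.

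The remaining and main difficulty is to pass from these bounds for the ordinary powers $J^{t}$ to the required bound for the Frobenius powers $J^{[p^{e}]}$. The short exact sequence $0\to J^{[p^{e}]}\to R\to R/J^{[p^{e}]}\to 0$, together with $\dim(R/J^{[p^{e}]})=d-1<d$, exhibits $H^{d}_\fm(J^{[p^{e}]})$ as an extension of the fixed module $H^{d}_\fm(R)$ by a quotient of $H^{d-1}_\fm(R/J^{[p^{e}]})$; since $H^{d-1}_\fm(R)$ has finite length and passing from a module $M$ to a quotient $M/L$ introduces new socle only in degrees $\ge\beg(L)-1$, it is enough to bound $\beg(\Soc(H^{d-1}_\fm(R/J^{[p^{e}]})))$ below linearly in $p^{e}$. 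The inclusion $J^{Np^{e}}\subseteq J^{[p^{e}]}$, where $N$ is the number of generators of $J$, induces a surjection $H^{d-1}_\fm(R/J^{Np^{e}})\twoheadrightarrow H^{d-1}_\fm(R/J^{[p^{e}]})$ whose kernel is a quotient of $H^{d-1}_\fm(J^{[p^{e}]}/J^{Np^{e}})$, so by the same socle estimate one is reduced to a linear lower bound for $\beg$ of this last module. I expect this to be the hard core of the argument: controlling the connecting local cohomology of the $(d-1)$-dimensional subquotient $J^{[p^{e}]}/J^{Np^{e}}$, which I would attack by descending induction on the cohomological degree, using that $J^{t}$ and $J^{[p^{e}]}$ have the same top-dimensional support and that, by the previous step, all lower local cohomology modules entering the induction have finite length with $\beg$ bounded linearly in the exponent. (This comparison of Frobenius and ordinary powers may alternatively be part of what is already carried out in the proof of Theorem~\ref{gauge same as least socle degree}.) Once $\beg(\Soc(H^{d-1}_\fm(R/J^{[p^{e}]})))\ge -c'p^{e}$ is established, the extension description gives $\beg(\Soc(H^{d}_\fm(J^{[p^{e}]})))\ge -c''p^{e}$, and Theorem~\ref{gauge same as least socle degree} then delivers the gauge-boundedness of $\cC^{R}$.
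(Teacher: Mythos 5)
Your first half tracks the paper: you verify that the canonical ideal $J=\omega$ satisfies the hypotheses of Theorem \ref{Cartier divisor case} (height one, $R/\omega$ equidimensional, $R_\fp$ Cohen--Macaulay and $\omega_\fp$ principal for non-maximal $\fp$), feed the resulting bounds through Lemma \ref{beg for Ext with k} into Theorem \ref{main criterion} (this is Corollary \ref{corollary on socle of top local cohomology}), and reduce between $\beg\Soc(H^{d}_{\fm}(\omega^{t}))$ and $\beg\Soc(H^{d-1}_{\fm}(R/\omega^{t}))$ using finite generation of $H^{d-1}_{\fm}(R)$, which is exactly Lemma \ref{linking ideal and quotient}. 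But the step you yourself flag as ``the hard core'' --- passing from ordinary powers $\omega^{t}$ to Frobenius powers $\omega^{[p^{e}]}$ --- is a genuine gap, and the route you sketch does not close it. Comparing $R/\omega^{[p^{e}]}$ with $R/\omega^{Np^{e}}$ via $\omega^{Np^{e}}\subseteq\omega^{[p^{e}]}$ forces you to control $H^{d-1}_{\fm}(\omega^{[p^{e}]}/\omega^{Np^{e}})$, a module of dimension $d-1$ about which none of the established results say anything; bounding its socle degrees linearly in $p^{e}$ is essentially as hard as the original problem (recall the introduction's remark that linear bounds along Frobenius powers of quotient rings are open in general). Nor is this comparison hidden in the proof of Theorem \ref{gauge same as least socle degree}, as you speculate; it is carried out in Theorem \ref{thm: linear bound on socle degree}.

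The missing idea is to compare $\omega^{[p^{e}]}$ with $\omega^{p^{e}}$ (not $\omega^{Np^{e}}$) and to use the punctured-spectrum Gorenstein hypothesis at exactly this point: for every non-maximal prime $\fp$ the ideal $\omega_\fp$ is principal, so $\omega^{[p^{e}]}_\fp=\omega^{p^{e}}_\fp$, hence $\dim(\omega^{p^{e}}/\omega^{[p^{e}]})=0$ and therefore $H^{d}_{\fm}(\omega^{[p^{e}]})\cong H^{d}_{\fm}(\omega^{p^{e}})$ on the nose. Thus no socle (or $\beg$) estimate for any Frobenius-power quotient is needed at all: the required bound $\beg\Soc(H^{d}_{\fm}(\omega^{[p^{e}]}))\geq -cp^{e}$ is literally the ordinary-power bound $\beg\Soc(H^{d}_{\fm}(\omega^{p^{e}}))\geq -cp^{e}$, which you already obtained from Lemma \ref{linking ideal and quotient} and Corollary \ref{corollary on socle of top local cohomology} with $t=p^{e}$. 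With that substitution (and the pairing of Theorem \ref{gauge same as least socle degree} with Lemma \ref{linear growth in degree implies gauge bounded} to get from bounded generator degrees of $\Hom_R(F^e_*R,R)$ to gauge-boundedness), your argument becomes the paper's proof.
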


The paper is organized as follows. In \S\ref{canonical ideal and matlis dual}, we collect some materials on graded canonical ideals and (graded) Matlis dual which will be needed in \S\ref{gauge-bounded}. In \S\ref{main technical results}, we prove some technical results including both Theorems \ref{main criterion in intro} and \ref{Cartier divisor case in intro}. In the last section \S\ref{gauge-bounded}, we draw connections between our Question \ref{main question} and the notion of gauge-boundedness and prove Theorem \ref{isolated Gorenstein in intro}.


\section{Some generalities on canonical ideals and Matlis dual}
\label{canonical ideal and matlis dual}

In this section we collect some basic facts on graded canonical ideals and Matlis dual. These facts might already be known to experts; we opt to include them here for lack of proper references.

Let $R$ be a $d$-dimensional standard graded $k$-algebra where $k$ is a field. We can write $R:=S/\fa$ where $S=k[x_1,\dots,x_n]$ be a polynomial ring with the standard grading and $I\fa$ is a homogeneous ideal in $S$. Set $\fm:=(x_1,\dots,x_n)$. Since $R$ is a quotient of a polynomial ring, it admits a (graded) canonical module $\Omega_R$ which is isomorphic to $\Ext^{n-d}_S(R,S(-n))$ (in the category of graded modules). We will denote the graded Matlis dual by $-()^{\vee}$, {\it i.e.} for each graded $R$-module $M$
\[(M^{\vee})_\ell=\Hom_k(M_{-\ell},k).\]
Part of the graded local duality says that, for each finitely generated graded $R$-module $M$, there is a degree-preserving isomorphism $H^d_{\fm}(M)\cong \Hom_R(M,\Omega_R)^{\vee}$. 

\begin{proposition}
\label{graded canonical ideal}
Let $R$ be a $d$-dimensional standard graded $k$-algebra where $k$ is a field. Assume that $R$ is an integral domain and satisfies Serre's ($S_2$) condition. Then 
\begin{enumerate}
\item there is a homogeneous ideal $\omega$ of $R$ such that a degree-shift $\omega(\beg(\omega)+a(R))$ is isomorphic to the graded canonical module $\Omega$, where $a(R)=\en(H^{d}_{\fm}(R))$ (called the $a$-invariant of $R$); and
\item the natural map $R\to \Hom_R(\omega, \omega)$ is a degree-preserving isomorphism.
\end{enumerate}
\end{proposition}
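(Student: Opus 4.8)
The plan is to realize the canonical module $\Omega$ concretely as a homogeneous ideal and then to identify its endomorphism ring with $R$ using the $(S_2)$ condition. First I would use that $R$ is an integral domain of dimension $d$: then $\Omega$ is a nonzero finitely generated graded $R$-module which is torsion-free (indeed $(S_2)$, and of rank one over the fraction field, since $\Omega$ has rank one whenever $R$ is generically Gorenstein, which holds because $R$ is a domain). Picking any nonzero homogeneous element of the fraction field that carries $\Omega$ into $R$, we obtain an embedding $\Omega(m)\hookrightarrow R$ for some twist $m$; its image is a homogeneous ideal, and after normalizing the twist so that the embedding is degree-preserving and the ideal starts in the correct degree, one gets $\omega$. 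The bookkeeping for the twist is what produces the shift $\beg(\omega)+a(R)$: one computes $\en(H^d_{\fm}(\Omega(j)))$ via graded local duality ($H^d_{\fm}(\Omega)^{\vee}\cong \Hom_R(\Omega,\Omega)$, and then matching $a$-invariants) and chooses $j$ accordingly. I expect part (a) to be essentially a normalization argument once torsion-freeness and rank one are in hand.

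For part (b), the natural map $R\to\Hom_R(\omega,\omega)$ sending $r$ to multiplication by $r$ is injective since $R$ is a domain and $\omega\neq 0$. The content is surjectivity. Here I would argue that $\Hom_R(\omega,\omega)$ is a module-finite extension ring of $R$ sitting inside the fraction field $\operatorname{Frac}(R)$ (any endomorphism of the torsion-free rank-one module $\omega$ is multiplication by a fixed element of $\operatorname{Frac}(R)$), and that it is contained in $\omega:_{\operatorname{Frac}(R)}\omega$. The key point is that $\Hom_R(\omega,\omega)$ satisfies $(S_2)$ as an $R$-module — because $\omega$ does, and $\Hom$ into an $(S_2)$ module from a finitely generated module is again $(S_2)$ — and it agrees with $R$ in codimension one: at every height-one prime $\fp$, $R_\fp$ is Cohen-Macaulay (it is a one-dimensional domain) hence Gorenstein, so $\omega_\fp$ is a rank-one free $R_\fp$-module and $\Hom_{R_\fp}(\omega_\fp,\omega_\fp)=R_\fp$. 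Thus $R\hookrightarrow\Hom_R(\omega,\omega)$ is an inclusion of $(S_2)$ modules that is an equality in codimension $\le 1$; a standard $(S_2)$/depth argument (comparing the two via the exact sequence $0\to R\to\Hom_R(\omega,\omega)\to C\to 0$ and noting $C$ would have to be supported in codimension $\ge 2$ yet $\Hom_R(\omega,\omega)$ has depth $\ge 2$ at the relevant primes) forces $C=0$.

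The main obstacle I anticipate is the degree bookkeeping in part (a) — pinning down exactly which twist of the ideal $\omega$ is isomorphic to $\Omega$, i.e.\ verifying the formula involving $\beg(\omega)$ and $a(R)=\en(H^d_{\fm}(R))$. This requires carefully tracking gradings through graded local duality and through the chosen embedding $\Omega(m)\hookrightarrow R$, and checking that the ideal can be chosen so that its bottom degree is consistent with the stated shift. The ring-theoretic heart of part (b), by contrast, is the $(S_2)$-gluing argument above, which is routine once one knows $R_\fp$ is Gorenstein in codimension one and that $\Hom$ preserves $(S_2)$; I would isolate that as a short lemma. Everything else (torsion-freeness of $\Omega$, rank one, existence of the embedding, injectivity of $R\to\Hom_R(\omega,\omega)$) is formal.
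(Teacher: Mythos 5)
Your overall route coincides with the paper's: part (a) is the generically-Gorenstein/rank-one embedding of $\Omega$ as a homogeneous ideal (the paper cites the proof of Ma's Proposition 2.4 for this) plus a degree count via graded local duality, and part (b) is exactly the Aoyama/Hochster--Huneke $(S_2)$-gluing argument that the paper cites. Two remarks on the bookkeeping in (a): the identity you actually need is $\Omega^{\vee}\cong H^{d}_{\fm}(R)$ (graded local duality applied to $M=R$), which gives $\beg(\Omega)=-a(R)$ and hence, for any ideal realization $\omega(a)\cong\Omega$, the shift $a=\beg(\omega)+a(R)$ immediately; the identity you propose to use, $H^{d}_{\fm}(\Omega)^{\vee}\cong\Hom_R(\Omega,\Omega)$, is not the relevant one and would entangle (a) with (b) -- the paper notes that $(S_2)$ is not used in part (a).

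There is one step in your part (b) that is wrong as stated: ``at every height-one prime $\fp$, $R_\fp$ is Cohen--Macaulay (it is a one-dimensional domain) hence Gorenstein, so $\omega_\fp$ is a rank-one free $R_\fp$-module.'' Cohen--Macaulay does not imply Gorenstein: a one-dimensional CM local domain such as $k[[t^3,t^4,t^5]]$ is not Gorenstein, and the hypotheses here (domain plus $(S_2)$) impose no Gorenstein condition in codimension one, so $\omega_\fp$ need not be free at height-one primes. What rescues the argument is a weaker, standard fact: at a height-one prime $\fp$, $R_\fp$ is a one-dimensional CM local ring and $\omega_\fp$ localizes to its canonical module (using that $R$, being a domain and a quotient of a polynomial ring, is equidimensional and catenary), and for any CM local ring $A$ with canonical module $\omega_A$ the natural map $A\to\Hom_A(\omega_A,\omega_A)$ is an isomorphism whether or not $\omega_A$ is free (Bruns--Herzog, Theorem 3.3.4(d)). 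With that substitution your codimension-one identification, and hence the $(S_2)$/depth comparison that forces the cokernel to vanish, goes through; this is precisely the Aoyama argument the paper invokes. The remaining ingredients you use ($\Omega$ is $(S_2)$ and of rank one, $\Hom$ into an $(S_2)$ module is $(S_2)$, injectivity over a domain) are correct.
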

\begin{proof}
For part (a), the proof of the fact $\omega(a)$ is isomorphic to $\Omega$ (as graded modules) for some integer $a$ follows the same proof as the one of \cite[Proposition 2.4]{MaSufficientConditionFPurity}. We claim that $a=\beg(\omega)+a(R)$ and we reason as follows. By the graded local duality, $\omega(a)^\vee\cong H^d_{\fm}(R)$. Thus, $a(R)=\en(H^{d}_{\fm}(R))=-\beg(\omega(a))=-(\beg(\omega)-a)=-\beg(\omega)+a$. This implies that $a=\beg(\omega)+a(R)$. (Note that the ($S_2$) assumption is not used in part (a).)

For part (b), we use our ($S_2$) assumption and the proof is the same as the one of \cite[Proposition 4.4]{AoyamaBasicResultsCanonicalModules} (or \cite[Remrk 2.2(c)]{HochsterHunekeConnectedness}).
\end{proof}

The following observation on Matlis dual will be useful for us in \S\ref{gauge-bounded}.
\begin{proposition}
\label{Matlis dual gives essential extension}
Let $(A,\fm)$ be a noetherian complete local ring and let $M$ be a finitely generated $A$-module. Let $\E$ denote the injective hull of $A/\fm$. Assume that the surjection $\cF_0=A^{\oplus \mu}\twoheadrightarrow M$ induces an isomorphism $\cF_0/\fm F_0\xrightarrow{\sim}M/\fm M$. Then $\Hom_A(M,\E)\to \Hom_A(\cF_0, \E)$ (the Matlis dual of $\cF_0\twoheadrightarrow M$) is an essential extension.

Likewise, let $A$ be a standard graded ring over a field $k$ and $M$ be a finitely generated graded $A$-module. Assume that the degree-preserving surjection $\oplus_{i=1}^{\mu} A(\alpha_i)\twoheadrightarrow M$ induces an isomorphism $\oplus_{i=1}^{\mu} (A/\fm)(\alpha_i)\xrightarrow{\sim}M/\fm M$. Then the graded Matlis dual of $\oplus_{i=1}^{\mu} A(\alpha_i)\twoheadrightarrow M$ is an essential extension.
\end{proposition}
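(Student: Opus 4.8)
The plan is to transport the problem through Matlis duality so that it becomes a statement about submodule lattices, and then invoke Nakayama's lemma. Set $K_0:=\ker(\cF_0\twoheadrightarrow M)$, so that $M\cong\cF_0/K_0$. Since $A$ is complete local Noetherian, $(-)^\vee=\Hom_A(-,\E)$ is an exact contravariant functor with $(-)^{\vee\vee}\cong\mathrm{id}$, interchanging finitely generated and Artinian modules; applying it to $\cF_0\twoheadrightarrow M$ produces the claimed injection $M^\vee\hookrightarrow\cF_0^\vee$ whose image is $(\cF_0/K_0)^\vee=\{\phi\in\Hom_A(\cF_0,\E):\phi(K_0)=0\}$. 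More generally, for a submodule $K\subseteq\cF_0$ write $K^{\perp}:=\{\phi\in\cF_0^\vee:\phi(K)=0\}$, which is canonically $(\cF_0/K)^\vee$ viewed inside $\cF_0^\vee$. Because $\cF_0$ is finitely generated (so $\cF_0^{\vee\vee}\cong\cF_0$), the assignment $K\mapsto K^{\perp}$ is an inclusion-reversing bijection from the submodules of $\cF_0$ onto the submodules of $\cF_0^\vee$; under it $K=\cF_0$ goes to $0$, and $K_0$ goes to the image of $M^\vee$.

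Next I would note the elementary identity $K_1^{\perp}\cap K_2^{\perp}=(K_1+K_2)^{\perp}$, which is immediate from the definition. Combining this with the previous paragraph, for any submodule $N=K^{\perp}$ of $\cF_0^\vee$ one gets $N\cap M^\vee=K^{\perp}\cap K_0^{\perp}=(K+K_0)^{\perp}$. Hence the extension $M^\vee\hookrightarrow\cF_0^\vee$ is essential if and only if $(K+K_0)^{\perp}\ne 0$ whenever $K^{\perp}\ne 0$, equivalently if and only if $K+K_0\subsetneq\cF_0$ for every submodule $K\subsetneq\cF_0$.

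The hypothesis enters precisely at this point. Applying $-\otimes_A A/\fm$ to $0\to K_0\to\cF_0\to M\to 0$ identifies the kernel of the induced surjection $\cF_0/\fm\cF_0\twoheadrightarrow M/\fm M$ with the image of $K_0$ in $\cF_0/\fm\cF_0$; thus the assumption that this surjection is an isomorphism is exactly the statement $K_0\subseteq\fm\cF_0$. Now if $K+K_0=\cF_0$ for some submodule $K$, then $K+\fm\cF_0=\cF_0$, so $\fm\cdot(\cF_0/K)=\cF_0/K$, and since $\cF_0/K$ is finitely generated over the Noetherian local ring $A$, Nakayama forces $\cF_0/K=0$, i.e. $K=\cF_0$. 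This verifies the criterion from the previous paragraph and completes the proof.

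For the graded statement I would run the identical argument with the graded Matlis dual, using that $\cF_0=\bigoplus_{i=1}^{\mu}A(\alpha_i)$ is graded-free of finite rank (so $\cF_0^\vee$ is Artinian in the graded category and $\cF_0^{\vee\vee}\cong\cF_0$) and the graded Nakayama lemma. The only step that needs any care — and so the main obstacle, such as it is — is setting up the submodule dictionary correctly: one must use finite generation of $\cF_0$ to know $K\mapsto K^{\perp}$ is a genuine bijection on submodules and that it converts sums into intersections. Once that dictionary is in place, the content of the proposition is exactly Nakayama's lemma.
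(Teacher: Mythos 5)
Your argument is correct, and every claim you use is standard Matlis-duality machinery over a complete local ring (the lattice anti-isomorphism $K\mapsto K^{\perp}$ needs completeness to know $\cF_0^{\vee\vee}\cong\cF_0$, which you have by hypothesis; the identification of the hypothesis with $K_0\subseteq\fm\cF_0$ and the final Nakayama step are exactly right, and the graded version goes through verbatim with the graded dual). The route is organized differently from the paper's: the paper argues by contradiction, taking a nonzero $L\subseteq\cF_0^{\vee}$ with $L\cap M^{\vee}=0$, dualizing the inclusion $L\oplus M^{\vee}\subseteq\cF_0^{\vee}$ back to a surjection $\cF_0\twoheadrightarrow \Hom_A(L,\E)\oplus M$, and then comparing the rank of $\cF_0$ with the minimal number of generators of $M$ (so Nakayama enters only implicitly, through the notion of minimal generators). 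You instead set up the full inclusion-reversing dictionary between submodules of $\cF_0$ and of $\cF_0^{\vee}$, convert essentialness into the purely module-theoretic criterion ``$K+K_0=\cF_0$ forces $K=\cF_0$,'' and then apply Nakayama explicitly using $K_0\subseteq\fm\cF_0$. What the paper's version buys is brevity: one dualization and a generator count, no need to prove the lattice bijection or the identity $K_1^{\perp}\cap K_2^{\perp}=(K_1+K_2)^{\perp}$. What your version buys is transparency: it isolates exactly where the minimality hypothesis is used, yields an if-and-only-if criterion for essentialness of $M^{\vee}\hookrightarrow\cF_0^{\vee}$ rather than just the implication needed, and makes clear that the statement is precisely Nakayama's lemma transported through duality. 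Both hinge on the same two pillars (exactness/biduality of $\Hom_A(-,\E)$ and minimality of the presentation), so the difference is one of packaging rather than of underlying ideas.
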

\begin{proof}
Since the proofs in the local case and in the graded case are identical, we include the proof in the local case only. 

Assume otherwise, $\Hom_A(\cF_0,\E)$ would admit a non-zero submodule $L$ such that $L\cap\Hom_A(M,\E)=0$, {\it i.e.} $L\oplus\Hom_A(M,\E)$ is naturally a submodule of $\Hom_A(\cF_0,\E)$. By Matlis Duality, we then would have
\[\cF_0\cong \Hom_A(\Hom_A(\cF_0,\E),\E)\twoheadrightarrow \Hom_A(L\oplus\Hom_A(M,\E),\E)\cong \Hom_A(L,\E)\oplus M\]
Since $\Hom_A(L,\E)\neq 0$, this would imply that the rank of $\cF_0$ is strictly greater than the minimal number of generators of $M$, a contradiction.
\end{proof}

The following corollary is immediate.
\begin{corollary}
\label{minimal injective resolution by Matlis dual}

Let $(A,\fm)$ be a noetherian complete local ring and let $M$ be a finitely generated $A$-module. Let $\E$ denote the injective hull of $A/\fm$. Let $\cF_{\bullet}\to M\to 0$ be the minimal resolution of $M$. Then 
\[
0\to \Hom_A(M,\E)\to \Hom_A(\cF_{\bullet}, \E)
\]
is the minimal injective resolution of $\Hom_A(M,\E)$. 

Likewise, if $A$ is a standard graded ring over a field $k$ and $M$ is a finitely generated graded $A$-module and $\cF_{\bullet}\to M\to 0$ be the graded minimal resolution of $M$. Then the graded Matlis dual of $\cF_{\bullet}\to M\to 0$ is the graded minimal injective resolution of the graded Matlis dual of $M$.
\end{corollary}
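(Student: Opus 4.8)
The plan is to deduce this at once from the exactness of Matlis duality together with Proposition~\ref{Matlis dual gives essential extension}, applied in every homological degree. Write the minimal free resolution as $\cdots\to\cF_2\to[\partial_2]\cF_1\to[\partial_1]\cF_0\to[\partial_0]M\to 0$, with $\cF_i=A^{\oplus\mu_i}$, and for $i\geq 0$ let $Z_{i-1}:=\image(\partial_i)=\kernel(\partial_{i-1})$ denote the $(i-1)$-st syzygy module, under the conventions $Z_{-1}:=M$ and $\partial_{-1}\colon M\to 0$. Minimality of $\cF_\bullet$ says precisely that each corestricted surjection $\cF_i\onto Z_{i-1}$ is a projective cover, \ie it induces an isomorphism $\cF_i/\fm\cF_i\xrightarrow{\ \sim\ }Z_{i-1}/\fm Z_{i-1}$.

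First I would apply the exact contravariant functor $\Hom_A(-,\E)$ to the resolution. Exactness turns $\cF_\bullet\to M\to 0$ into an exact complex $0\to\Hom_A(M,\E)\to\Hom_A(\cF_\bullet,\E)$, and since $\Hom_A(\cF_i,\E)\cong\Hom_A(A,\E)^{\oplus\mu_i}\cong\E^{\oplus\mu_i}$ is injective, this exhibits an injective resolution of $\Hom_A(M,\E)$. To see it is the minimal one, I would use the standard fact that an exact complex $0\to N\to J^0\to[d^0]J^1\to\cdots$ of injective modules is a minimal injective resolution of $N$ exactly when, for every $i\geq 0$, the inclusion $\kernel(d^i)\hookrightarrow J^i$ is an essential extension (for $i=0$ this is the assertion that $N\hookrightarrow J^0$ is essential). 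In our situation $d^i=\Hom_A(\partial_{i+1},\E)$; factoring $\partial_{i+1}$ as $\cF_{i+1}\onto Z_i\into\cF_i$ and dualizing --- using exactness of $\Hom_A(-,\E)$ once more, together with the isomorphism $\cF_i/Z_i\cong Z_{i-1}$ coming from $\cF_i\onto Z_{i-1}$ --- identifies $\kernel(d^i)$ with the submodule $\Hom_A(Z_{i-1},\E)\subseteq\Hom_A(\cF_i,\E)$, the inclusion being the Matlis dual of $\cF_i\onto Z_{i-1}$. Now Proposition~\ref{Matlis dual gives essential extension}, applied to the projective cover $\cF_i\onto Z_{i-1}$, says exactly that this inclusion is an essential extension, which is the condition we need. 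The graded statement follows from the identical argument, reading ``graded injective hull'' and ``graded essential extension'' for their ungraded counterparts and invoking the graded half of Proposition~\ref{Matlis dual gives essential extension}.

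I do not anticipate a real obstacle; the corollary is indeed immediate. The two points that deserve a moment's care are the formal bookkeeping that identifies the $i$-th cosyzygy of the dualized complex with $\Hom_A(Z_{i-1},\E)$ --- a direct consequence of the exactness of Matlis duality --- and the observation that minimality of $\cF_\bullet$ is \emph{literally} the projective-cover hypothesis needed to invoke Proposition~\ref{Matlis dual gives essential extension}; beyond that one only needs to recall the characterization of minimal injective resolutions in terms of essential cosyzygies.
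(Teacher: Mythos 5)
Your argument is correct and is exactly the intended one: the paper states the corollary as immediate from Proposition~\ref{Matlis dual gives essential extension}, and your proof simply spells this out by dualizing the (exact) resolution, identifying each cosyzygy inclusion with the Matlis dual of the projective cover $\cF_i\twoheadrightarrow Z_{i-1}$, and invoking the proposition degree by degree, together with the standard essential-cosyzygy characterization of minimal injective resolutions. No gaps; the graded case is handled the same way, as you note.
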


\section{Main technical results}
\label{main technical results}

Throughout this section, $R$ denotes a standard graded $k$-algebra over a field $k$ and $\cF_{\bullet}\to k\to 0$ denotes the minimal graded free resolution of $k$ where $\cF_i=\oplus_{\ell}R(-\alpha_{i\ell})$ with $\alpha_{i\ell} \geq 0$. We set $\alpha_i:=\max\{\alpha_{i\ell}\}$. For each finitely generated graded $R$-module $M$, we will denote the reflexive dual $\Hom_R(M,R)$\footnote{Since $M$ is finitely generated, $\Hom_R(M,R)$ coincides with the graded $\sideset{^*}{_R}{\Hom}(M,R)$.} by ${M^*}$. Analogously, for each coherent sheaf $\mathscr{F}$ on $X=\Proj(R)$, we set ${\mathscr{F}^*}:=\sHom(\mathscr{F},\sO_X)$.

We begin with the following observation which follows immediately from the definition of $\beg(-)$.
\begin{lemma}
\label{beg under subquotient}
Let $0\to L\to M\to N\to 0$ be a short exact sequence of graded $R$-modules where all morphisms are degree-preserving. Then
\[\beg(M)=\min\{\beg(L),\beg(N)\}.\]

In particular, if $M'$ is a subquotient of $M$, then $\beg(M')\geq \beg(M)$.
\end{lemma}

Our next lemma provides a lower bound of the lowest degree of $\Ext^i_R(k, M)$ in terms of $\beg(M)$ and the graded free resolution of $k$.
\begin{lemma}
\label{beg for Ext with k}
Let $M$ be a finitely generated graded $R$-module. Then
\[\beg(\Ext^i_R(k, M))\geq \beg(M)-\alpha_i.\] 
\end{lemma}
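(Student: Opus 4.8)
The plan is to compute $\Ext^i_R(k,M)$ as the cohomology of the cochain complex $\Hom_R(\cF_\bullet, M)$, where $\cF_\bullet \to k \to 0$ is the fixed minimal graded free resolution with $\cF_i = \bigoplus_\ell R(-\alpha_{i\ell})$. Since $\Ext^i_R(k,M)$ is a subquotient of $\Hom_R(\cF_i, M)$, Lemma \ref{beg under subquotient} reduces the problem to bounding $\beg\bigl(\Hom_R(\cF_i, M)\bigr)$ from below. So the first step is simply to invoke that lemma.

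Next I would unwind the graded Hom. Because $\cF_i = \bigoplus_\ell R(-\alpha_{i\ell})$ is free of finite rank, there is a degree-preserving isomorphism
\[
\Hom_R(\cF_i, M) \;\cong\; \bigoplus_\ell \Hom_R\bigl(R(-\alpha_{i\ell}), M\bigr) \;\cong\; \bigoplus_\ell M(\alpha_{i\ell}).
\]
Hence $\beg\bigl(\Hom_R(\cF_i,M)\bigr) = \min_\ell \beg\bigl(M(\alpha_{i\ell})\bigr) = \min_\ell \bigl(\beg(M) - \alpha_{i\ell}\bigr) = \beg(M) - \max_\ell \alpha_{i\ell} = \beg(M) - \alpha_i$, using the definition $\alpha_i := \max_\ell \alpha_{i\ell}$ and the elementary fact $\beg(M(a)) = \beg(M) - a$.

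Combining the two steps gives
\[
\beg\bigl(\Ext^i_R(k,M)\bigr) \;\geq\; \beg\bigl(\Hom_R(\cF_i, M)\bigr) \;=\; \beg(M) - \alpha_i,
\]
which is the claim. There is no real obstacle here: the only points to be slightly careful about are that all the identifications are degree-preserving (so that $\beg$ behaves as expected) and that the free modules have finite rank (so that $\Hom$ commutes with the direct sum and the minimum in the computation of $\beg$ is attained); both are guaranteed by the standing hypotheses, since $R$ is noetherian and $k$ is a finitely generated module. If one wanted the sharper statement one could note the inequality can only fail to be an equality when the relevant (co)cycles or (co)boundaries happen to vanish in the lowest degree, but for the purposes of this lemma the stated bound suffices.
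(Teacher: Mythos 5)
Your proposal is correct and follows exactly the paper's argument: identify $\Ext^i_R(k,M)$ as a subquotient of $\Hom_R(\cF_i,M)\cong\oplus_\ell M(\alpha_{i\ell})$, compute $\beg(\oplus_\ell M(\alpha_{i\ell}))=\beg(M)-\alpha_i$, and apply Lemma \ref{beg under subquotient}. Nothing further is needed.
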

\begin{proof}
$\Ext^i_R(k, M)$ is the $i$-th homology of the complex $\Hom_R(\cF_{\bullet}, M)$ and hence is a subquotient of $\Hom_R(\cF_i,M)\cong \oplus_{\ell}M(\alpha_{i\ell})$. It is clear that $\beg(\oplus_{\ell}M(\alpha_{i\ell}))=\beg(M)-\alpha_i$, and hence Lemma \ref{beg under subquotient} implies $\beg(\Ext^i_R(k, M))\geq \beg(M)-\alpha_i$.
\end{proof}

One of our main results in this section is the following criterion.

\begin{theorem}
\label{main criterion}
Let $R$ be a standard graded $k$-algebra over a field $k$ and let $I$ be a homogeneous ideal. Set $d=\dim(R/I)$. Assume there is a constant $c'$ such that 
\begin{equation}
\label{linear assumption on lower lc}
\beg(\Ext^i_R(k, H^j_{\fm}(R/I^t)))\geq -c't
\end{equation} 
for $j<d$, $i<d+2$ and for all $t\geq 1$. Then there exists a constant $c$ such that
\[\beg(\Soc(H^d_{\fm}(R/I^t)))\geq -ct,\] 
for all $t\geq 1$.
\end{theorem}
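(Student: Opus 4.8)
The plan is to pass to the minimal graded free resolution $\cF_\bullet \to k \to 0$ and compute the socle of $H^d_{\fm}(R/I^t)$ via the spectral sequence (or by a direct hypercohomology/double-complex argument) relating $\Ext^\bullet_R(k, -)$ applied to the local cohomology modules $H^j_{\fm}(R/I^t)$. Write $M_t := R/I^t$. Since $\Soc(H^d_{\fm}(M_t)) = \Hom_R(k, H^d_{\fm}(M_t)) = \Ext^0_R(k, H^d_{\fm}(M_t))$, what I really want is a lower bound on $\beg\big(\Ext^0_R(k, H^d_{\fm}(M_t))\big)$. The difficulty is that this is not directly one of the Ext modules hypothesized in \eqref{linear assumption on lower lc} — those only control $H^j_{\fm}$ for $j<d$. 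So the top local cohomology $H^d_{\fm}(M_t)$ has to be reached indirectly.

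First I would set up the composed-functor spectral sequence $E_2^{i,j} = \Ext^i_R\big(k, H^j_{\fm}(M_t)\big) \Rightarrow \Ext^{i+j}_{R,\fm}(k, M_t)$, where the abutment is the hyper-Ext computing $H^{i+j}_{\fm}(M_t)$ after applying $\Hom_R(k,-)$ — concretely, it is the cohomology of $\Hom_R(\cF_\bullet, \Gamma_{\fm}\text{-resolution of } M_t)$, or equivalently $\Ext^*$ in the derived category of $R\Gamma_{\fm}(M_t)$. The key is that the abutment in total degree $d$ has a finite filtration whose graded pieces are subquotients of $E_2^{i,j}$ with $i+j=d$; in particular $E_\infty^{0,d}$, which is a subquotient of $\Hom_R(k, H^d_{\fm}(M_t)) = \Soc(H^d_{\fm}(M_t))$, differs from $\Soc(H^d_{\fm}(M_t))$ only by the images of differentials $d_r: E_r^{-r, d+r-1} \to E_r^{0,d}$ — but $E_r^{-r,d+r-1}=0$ since negative Ext-degrees vanish, so in fact $E_\infty^{0,d} = \Soc(H^d_{\fm}(M_t))$ exactly. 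Hence $\Soc(H^d_{\fm}(M_t))$ is a \emph{subquotient} of $\Ext^d_{R,\fm}(k, M_t)$, and by Lemma \ref{beg under subquotient} it suffices to bound $\beg$ of that abutment and of everything that gets quotiented in. Concretely: in the filtration of the abutment $A^d := \Ext^d_{R,\fm}(k,M_t)$, the subquotients are $E_\infty^{i, d-i}$ for $0 \le i \le d$ (again negative columns vanish, and the $i>d$ terms involve $H^{j}_{\fm}$ with $j<0$), each of which is a subquotient of $E_2^{i,d-i} = \Ext^i_R(k, H^{d-i}_{\fm}(M_t))$. For $i \ge 1$ we have $d-i < d$, so hypothesis \eqref{linear assumption on lower lc} plus Lemma \ref{beg under subquotient} gives $\beg(E_\infty^{i,d-i}) \ge -c't$. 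For $i=0$ this is $\Soc(H^d_{\fm}(M_t))$ itself — the term I am trying to bound — so instead I bound $\beg(A^d)$ directly and run the filtration the other way.

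So the second and crucial step is a lower bound on $\beg\big(\Ext^d_{R,\fm}(k,M_t)\big)$ that is linear in $t$. The point is that $\Ext^{*}_{R,\fm}(k,M_t)$ can be computed as the cohomology of the complex $\Hom_R(\cF_\bullet, C^\bullet_{\fm}(M_t))$ where $C^\bullet_{\fm}$ is the \v{C}ech (or stable Koszul) complex on a homogeneous system of parameters of $R$ modulo $\fa$; equivalently by Lemma \ref{beg for Ext with k}-type reasoning applied to the hypercohomology, $\Ext^d_{R,\fm}(k, M_t)$ is a subquotient of $\bigoplus_{i+j=d} \Hom_R(\cF_i, H^j_{\fm}(M_t))$. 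For the terms with $j<d$ this is controlled again by \eqref{linear assumption on lower lc} (indeed $\Hom_R(\cF_i, H^j_{\fm}(M_t)) \cong \oplus_\ell H^j_{\fm}(M_t)(\alpha_{i\ell})$ has $\beg \ge \beg(H^j_{\fm}(M_t)) - \alpha_i$, and $\beg(H^j_{\fm}(M_t)) \ge \beg(\Soc(H^j_{\fm}(M_t)))$ since the socle is the bottom of an Artinian module... no — this last inequality is false in general, which is exactly why the hypothesis is phrased with all $\Ext^i$ for $i < d+2$). Here is where the index range $i < d+2$ in the hypothesis earns its keep: I need enough columns of the spectral sequence to be controlled so that, after accounting for \emph{all} differentials into and out of the $E_2^{0,d}$ spot across all pages, the comparison with $\Soc(H^d_{\fm}(M_t))$ is tight. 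Unwinding, the relevant differentials are $d_r : E_r^{0,d} \to E_r^{r, d-r+1}$; for these targets to be governed by \eqref{linear assumption on lower lc} I need $d-r+1 < d$, i.e. $r \ge 2$, which always holds, and I need the target column index $r$ to satisfy $r < d+2$, i.e. I only need pages up to $r = d+1$ — matching the stated range $i < d+2$. So the argument is: $\Soc(H^d_{\fm}(M_t)) = E_2^{0,d}$ and it surjects (through the successive pages, killing at each step the image of a differential $d_r$ whose source $E_r^{0,d}$ has $\beg$ already at least that of $\Soc$, hence harmless, and being a submodule of quotients) onto $E_\infty^{0,d}$, with each kernel a subquotient of some $E_2^{r, d-r+1}$, $2 \le r \le d+1$, all of which have $\beg \ge -c't$; and $E_\infty^{0,d}$ is a subquotient of $A^d$, whose own $\beg$ is bounded below by $\min$ over $i+j=d$, $j<d$ of $\beg(H^j_{\fm}(M_t)) - \alpha_i \ge -c't - \max_i \alpha_i$. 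Wait — this still requires $\beg(H^j_{\fm}(M_t))$ to be linearly bounded, not just its socle.

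The honest resolution of that gap — and I expect this to be the main obstacle — is that one must run the \emph{entire} spectral-sequence bookkeeping purely in terms of the hypothesized quantities $\beg(\Ext^i_R(k, H^j_{\fm}(M_t)))$, never invoking $\beg(H^j_{\fm}(M_t))$ itself. This is possible because the abutment $\Ext^*_{R,\fm}(k, M_t)$ has \emph{two} spectral sequences converging to it: the one above with $E_2^{i,j} = \Ext^i_R(k, H^j_{\fm}(M_t))$, and the other with $E_2^{i,j} = H^i_{\fm}(\Ext^j_R(k, M_t))$. Since $\Ext^j_R(k, M_t)$ is a finite-length (hence Artinian) graded module supported at $\fm$, we have $H^i_{\fm}(\Ext^j_R(k,M_t)) = \Ext^j_R(k,M_t)$ for $i=0$ and $0$ otherwise, so this second spectral sequence \emph{degenerates at} $E_2$ and gives $\Ext^d_{R,\fm}(k,M_t) = \Ext^d_R(k, M_t)$ — and by Lemma \ref{beg for Ext with k} this has $\beg \ge \beg(M_t) - \alpha_d = \beg(R/I^t) - \alpha_d = -\alpha_d$ (taking $\beg(R/I^t)=0$), a \emph{constant}! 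Therefore $\beg(A^d) \ge -\alpha_d$, hence $\beg(E_\infty^{0,d}) \ge -\alpha_d$ too, and then running the first spectral sequence backwards: $\Soc(H^d_{\fm}(M_t)) = E_2^{0,d}$ is an extension (via the pages) where at stage $r$ we have a short exact sequence $0 \to \ker d_r|_{E_r^{0,d}} \to E_r^{0,d} \to \operatorname{im} d_r \to 0$ with $\operatorname{im} d_r \subseteq E_r^{r,d-r+1}$ a subquotient of $\Ext^r_R(k, H^{d-r+1}_{\fm}(M_t))$, and $E_{r+1}^{0,d} = \ker d_r|_{E_r^{0,d}}$ (no incoming differential since column $-r$ vanishes); so $\beg(E_2^{0,d}) = \min\big\{\beg(E_\infty^{0,d}),\, \min_{2\le r\le d+1} \beg(\operatorname{im} d_r)\big\} \ge \min\{-\alpha_d,\, -c't\} \ge -ct$ for a suitable constant $c$ built from $c'$ and the $\alpha_i$ with $i < d+2$. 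That is the proof; I would write it up by first isolating the two-spectral-sequences comparison as the technical heart, then the backward induction on pages as routine application of Lemma \ref{beg under subquotient}, noting that the range $i < d+2$ in \eqref{linear assumption on lower lc} is exactly what is consumed by the differentials $d_2, \dots, d_{d+1}$ out of $E^{0,d}$.
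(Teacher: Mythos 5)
Your final argument is correct and is essentially the paper's own proof: the same spectral sequence $E_2^{i,j}=\Ext^i_R(k,H^j_{\fm}(R/I^t))$ converging to $\Ext^{i+j}_R(k,R/I^t)$, with the abutment bounded below by the constant $-\alpha_d$ via Lemma \ref{beg for Ext with k} (since $\beg(R/I^t)=0$), and the filtration $E^{0,d}_{r+1}=\ker(d_r)\subseteq E^{0,d}_r$ whose quotients embed in $E^{r,d-r+1}_r$ and are controlled by the hypothesis for $2\le r\le d+1$, giving $\beg(\Soc(H^d_{\fm}(R/I^t)))\geq\min\{-c't,-\alpha_d\}$. The only cosmetic difference is that the paper identifies the abutment directly from $\Hom_R(k,\Gamma_{\fm}(-))=\Hom_R(k,-)$ rather than through your second (degenerate) spectral sequence, and your interim assertion that $E^{0,d}_\infty$ equals $\Soc(H^d_{\fm}(R/I^t))$ is incorrect as stated (only incoming differentials vanish), but your concluding paragraph handles the outgoing differentials correctly, so the finished argument matches the paper's.
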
 
\begin{proof}
Since the $\fm$-torsion functor $\Gamma_{\fm}$ sends injectives to injectives and $\Hom_R(R/\fm, \Gamma_{\fm}(-))=\Hom_R(R/\fm,-)$, we have the following spectral sequence:
\[E^{i,j}_2:=\Ext^i_R(R/\fm, H^j_{\fm}(-))\Rightarrow \Ext^{i+j}_R(R/\fm, -).\]
Since $\beg(R/I^t)=0$ for all $t\geq 1$, by Lemma \ref{beg for Ext with k} we have
\[\beg(\Ext^d_R(k, R/I^t))\geq -\alpha_d.\]
Recall that $\alpha_d$ denotes the largest degree-shift in the $d$-th graded free module in the minimal graded free resolution of $k$. The convergence of this spectral sequence means that $\Ext^d_R(k, R/I^t)$ admits a filtration $\cdots\subseteq E_{i+1}\subseteq E_i\subseteq \cdots\subseteq \Ext^d_R(k, R/I^t)$ such that $E^{0,d}_{\infty}=E_0/E_1$. Since $E^{0,d}_{\infty}$ is a subqotient of $\Ext^d_R(k, R/I^t)$, it follows from Lemma \ref{beg for Ext with k} that
\[\beg(E^{0,d}_{\infty})\geq \beg(\Ext^d_R(k, R/I^t))\geq -\alpha_d.\]
Since $H^{\geq d+1}_{\fm}(R/I^t)=0$, we have $E^{0,d}_{d+2}=E^{0,d}_{\infty}$. For each $i\leq d+1$, the incoming differential to $E^{0,d}_{i}$ is always 0; the outgoing differential is $\delta^{0,d}_{i}:E^{0,d}_{i}\to E^{i, d-i+1}_{i}$. Therefore, $E^{0,d}_{i+1}=\ker(\delta^{0,d}_{i})\subseteq E^{0,d}_{i}$ and hence $E^{0,d}_{i}/E^{0,d}_{i+1}$ is a submodule of $E^{i, d-i+1}_{i}$.

Since $E^{i, d-i+1}_{2}=\Ext^i_R(k, H^{d-i+1}_{\fm}(R/I^t))$ and the module $E^{i, d-i+1}_{i}$ is a subquotient of $E^{i, d-i+1}_{i-1}$ for each $i$, combining our assumption (\ref{linear assumption on lower lc}) and Lemma \ref{beg under subquotient}, we have
\[\beg(E^{i, d-i+1}_{i})\geq -c't\]
for $i\leq d+1$ and all $t\geq 1$. Since $E^{0,d}_{i}/E^{0,d}_{i+1}$ is a submodule of $E^{i, d-i+1}_{i}$, Lemma \ref{beg under subquotient} implies that
\[\beg(E^{0,d}_{i}/E^{0,d}_{i+1})\geq -c't\]
for all $i\leq d+1$ and for all $t\geq 1$. Since $\beg(E^{0,d}_{d+2})=\beg(E^{0,d}_{\infty})\geq -\alpha_d$, Lemma \ref{beg under subquotient} implies that $\beg(E^{0,d}_{d+1})\geq \min\{-c't, -\alpha_d\}$. Now a reverse induction on $i$ shows that 
\[\beg(\Soc(H^d_{\fm}(R/I^t)))=\beg(E^{0,d}_2)\geq \min\{-c't, -\alpha_d\}\]
which completes the proof.
\end{proof}

\begin{remark}
One may adjust the proof of Theorem \ref{main criterion} to derive a sufficient condition on a linear lower bound of $\beg(\Ext^s_R(k, H^d_{\fm}(R/I^t)))$ for an integer $s>0$ (one would need a different range on the homological index $i$ in the hypothesis on linear lower bounds on $\beg(\Ext^i_R(k, H^j_{\fm}(R/I^t)))$). Since our main application in \S\ref{gauge-bounded} only requires a linear lower bound on the socle degrees of $H^d_{\fm}(R/I^t)$, we will leave the formulation of such a sufficient condition for $\beg(\Ext^s_R(k, H^d_{\fm}(R/I^t)))$ ($s>0$) to another project or another person.
\end{remark}

The following asymptotic vanishing result will be crucial for our application in the next section.
\begin{theorem}
\label{Cartier divisor case}
Let $R$ is a standard graded $k$-algebra where $k$ is a field of arbitrary characteristic and $I$ be a homogeneous ideal of height one in $R$ such that $R/I$ is equi-dimensional. Assume that $R_{\fp}$ is Cohen-Macaulay and that $I_{\fp}$ is principally generated by a non-zero-divisor for each non-maximal prime ideal $\fp$ in $R$. Then there exists a constant $c$ such that
\[\beg(H^j_{\fm}(R/I^t))\geq -ct\]
for $j<\dim(R/I)$ and all $t\geq 1$. 
\end{theorem}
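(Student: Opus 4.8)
The plan is to pass from local cohomology to sheaf cohomology on $X=\Proj(R)$, reduce via the $I$-adic filtration to a twisted vanishing statement for the structure sheaf of $D=\Proj(R/I)$, and then exploit that the hypotheses force $D$ to be Cohen--Macaulay and equidimensional, so that Serre duality is available on $D$. First I would record the geometric input: since $R_{\fp}$ is Cohen--Macaulay and $I_{\fp}=(f_{\fp})$ is generated by a non-zero-divisor for every non-maximal $\fp$, the sheaf $\mathscr{L}:=\widetilde{I}$ is an invertible sheaf on $X$, the multiplication maps $\mathscr{L}^{\otimes t}\to\widetilde{I^{t}}$ are isomorphisms, and $\widetilde{I^{s}/I^{s+1}}=\widetilde{I^{s}}\otimes_{\sO_{X}}\sO_{D}=\mathscr{M}^{\otimes s}$ is a line bundle on $D$, where $\mathscr{M}:=\mathscr{L}|_{D}=\widetilde{I/I^{2}}$. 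Moreover $(R/I)_{\fp}=R_{\fp}/(f_{\fp})$ is Cohen--Macaulay for every non-maximal $\fp$, so $D$ is a Cohen--Macaulay scheme, and since $R/I$ is equidimensional $D$ is equidimensional of dimension $d-1$; in particular $D$ has a dualizing sheaf $\omega_{D}$ and Serre duality holds on $D$.

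Next I would reduce to the graded pieces of the associated graded ring. The descending chain $R/I^{t}\supseteq I/I^{t}\supseteq\cdots\supseteq I^{t-1}/I^{t}\supseteq 0$ has successive quotients $I^{s}/I^{s+1}$, and applying Lemma~\ref{beg under subquotient} to the long exact sequences of $\fm$-local cohomology attached to these short exact sequences gives, by a finite downward induction,
\[
\beg\bigl(H^{j}_{\fm}(R/I^{t})\bigr)\ \geq\ \min_{0\le s\le t-1}\ \beg\bigl(H^{j}_{\fm}(I^{s}/I^{s+1})\bigr).
\]
Thus it suffices to bound $\beg(H^{j}_{\fm}(I^{s}/I^{s+1}))$ below by a linear function of $s$ (with coefficients independent of $s$) for $j<d$. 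The case $j=0$ is immediate, as $H^{0}_{\fm}(I^{s}/I^{s+1})\subseteq I^{s}/I^{s+1}$ lives in non-negative degrees. For $j\ge 1$ I would use the standard identifications $H^{j}_{\fm}(I^{s}/I^{s+1})_{\ell}\cong H^{j-1}(D,\mathscr{M}^{\otimes s}(\ell))$ for $j\ge 2$, together with the exact sequence $0\to H^{0}_{\fm}(I^{s}/I^{s+1})\to I^{s}/I^{s+1}\to\bigoplus_{\ell}H^{0}(D,\mathscr{M}^{\otimes s}(\ell))\to H^{1}_{\fm}(I^{s}/I^{s+1})\to 0$ for $j=1$. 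So everything comes down to two vanishing statements, each with a linear-in-$s$ threshold: (i) $H^{0}(D,\mathscr{M}^{\otimes s}(\ell))=0$ for $\ell$ small, and (ii) $H^{q}(D,\mathscr{M}^{\otimes s}(\ell))=0$ for $1\le q\le d-2$ and $\ell$ small.

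For (i): by Serre, $\mathscr{M}^{\vee}(a_{0})$ is globally generated for some $a_{0}\ge 0$, hence so is $\mathscr{M}^{\otimes -s}(a_{0}s)$, which yields a surjection $\sO_{D}(-a_{0}s)^{\oplus r_{s}}\twoheadrightarrow\mathscr{M}^{\otimes -s}$ and dually an embedding $\mathscr{M}^{\otimes s}\hookrightarrow\sO_{D}(a_{0}s)^{\oplus r_{s}}$; since $\sO_{D}$ is a Cohen--Macaulay sheaf of positive dimension, $H^{0}(D,\sO_{D}(m))=0$ for $m$ below a constant, so $H^{0}(D,\mathscr{M}^{\otimes s}(\ell))=0$ for $\ell<-a_{0}s-c_{0}$; inserting this in the $j=1$ sequence also controls $\beg(H^{1}_{\fm}(I^{s}/I^{s+1}))$. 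For (ii), the main point, I would invoke Serre duality on the Cohen--Macaulay equidimensional scheme $D$ of dimension $d-1$: since $\mathscr{M}^{\otimes s}(\ell)$ is locally free,
\[
H^{q}\bigl(D,\mathscr{M}^{\otimes s}(\ell)\bigr)\ \cong\ H^{\,d-1-q}\bigl(D,\ (\mathscr{M}^{\otimes -s}\otimes\omega_{D})(-\ell)\bigr)^{\vee},
\]
and $d-1-q\ge 1$ in the stated range, so (ii) follows once $H^{p}(D,(\mathscr{M}^{\otimes -s}\otimes\omega_{D})(m))=0$ for all $p\ge 1$ whenever $m\ge a_{1}s+b_{1}$, i.e.\ once the Castelnuovo--Mumford regularity of $\mathscr{M}^{\otimes -s}\otimes\omega_{D}$ grows at most linearly in $s$. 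I would then combine (i), (ii) and the displayed reduction to get $\beg(H^{j}_{\fm}(R/I^{t}))\ge -(c_{1}+c_{0})t$ for all $t\ge 1$ and $j<d$, which is the assertion.

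The hard part is the last step of (ii): establishing that $\operatorname{reg}(\mathscr{F}\otimes\mathscr{N}^{\otimes s})$ is bounded by a linear function of $s$ for a fixed coherent sheaf $\mathscr{F}$ and a fixed line bundle $\mathscr{N}$ on a projective scheme over a field. This is a standard fact, which I would either prove directly (using that $\mathscr{N}(a)$ and $\mathscr{N}^{\vee}(a)$ are both globally generated for $a\gg 0$, so that $\mathscr{N}^{\otimes \pm s}$ is sandwiched between sheaves of the form $\sO_{D}(\pm as)^{\oplus\bullet}$) or extract from the theory of regularity of graded components of finitely generated bigraded modules in the style of \cite{CutkoskyHerzogTrung99, Kodiyalam2000, TrungWangLinearRegularity}, applied to $\bigoplus_{s}\,(I^{-s}/I^{-(s-1)})\otimes_{R/I}\omega_{R/I}$ over the associated graded algebra $\bigoplus_{s}I^{-s}/I^{-(s-1)}$ of the filtration $\{I^{-s}\}$ (whose $s$-th piece sheafifies to $\mathscr{M}^{\otimes -s}$). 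Everything else is bookkeeping with the filtration, the local-cohomology/sheaf-cohomology dictionary, and Serre duality, all of which are only legitimate because the hypotheses make $D$ Cohen--Macaulay and equidimensional and make the associated-graded pieces line bundles on $D$.
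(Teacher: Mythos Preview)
Your proposal is correct and follows essentially the same route as the paper: reduce via the $I$-adic filtration to the conormal line-bundle powers $\mathscr{M}^{\otimes s}$ on the Cohen--Macaulay equidimensional scheme $D=\Proj(R/I)$, apply Serre duality there, and finish with a linear bound on the regularity of $\mathscr{M}^{\otimes -s}\otimes\omega_D$ (the paper cites \cite[Theorem~2.1]{DaoMontano} for this last step). The only cosmetic difference is that you treat $j=0,1$ by hand whereas the paper absorbs them into the Serre-duality argument, and the paper phrases the filtration step via $0\to\Sym^t(\sN^{*})\to\sO_{X_{t+1}}\to\sO_{X_t}\to 0$ rather than the full chain in $R/I^t$.
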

\begin{proof}
Write $R=S/J$ where $S=k[x_1,\dots,x_n]$ and set $X:=\Proj(R/I)\subset \Proj(R)\subseteq \PP^{n-1}_k$. Set $X_t=\Proj(R/I^t)$, $\sI=\tilde{I}$, and $d=\dim(X)=\dim(R/I)-1$.

The local cohomology and sheaf cohomology are linked by an exact sequence 
\[0\to H^0_{\fm}(R/I^t)\to R/I^t\to \oplus_{\ell}H^0(X_t,\sO_{X_t}(\ell))\to H^1_{\fm}(R/I^t)\to 0\] 
and isomorphisms 
\[H^j_{\fm}(R/I^t)\cong \oplus_{\ell}H^{j-1}(X_t,\sO_{X_t}(\ell))\quad (j\geq 2),\]
where all maps involved are degree-preserving. Therefore, proving our theorem amounts to proving that there exists a constant $c$ such that 
\[H^{j-1}(X_t,\sO_{X_t}(\ell))=0\]
for all $\ell\leq -ct$, all $j\leq d$ and all $t\geq 1$.

The assumptions on $R$ and $I$ imply that $R/I$ is generalized Cohen-Macaulay. Therefore there exists a constant $c_0\geq 0$ such that 
\[\beg(H^j_{\fm}(R/I))\geq -c_0,\]
for $j\leq d$. And hence $H^{j-1}(X,\sO_{X}(\ell))=0$ for $j\leq d$ and for $\ell<-c_0$. Since $R/I$ is generalized Cohen-Macaulay, $X$ is a Cohen-Macaulay projective scheme. Let $\omega_X$ denote the canonical sheaf on $X$. 

Our assumption on $I$ implies that $\sI^t/\sI^{t+1}=\Sym^t(\sI/\sI^2)$. Set $\sN:={(\sI/\sI^2)^*}$ which is the normal bundle on $X$ (relative to $\Proj(R)$), then $\sI^t/\sI^{t+1}=\Sym^t({\sN^*})$. Combining the long exact sequence of sheaf cohomology associated with the short exact sequence 
\[0\to \Sym^t({\sN^*})\to \sO_{X_{t+1}}\to \sO_{X_t}\to 0\]
and an induction on $t$, we conclude that to prove our theorem it suffices to show there exists an integer $c_1$ such that 
\begin{equation}
\label{vanishing of dual normal bundle}
H^{j-1}(X,\Sym^t({\sN^*})(\ell))=0
\end{equation}
for $j\leq d$ and for $\ell<-c_1t$. By Serre duality (which holds since $X$ is Cohen-Macaulay and equi-dimensional), (\ref{vanishing of dual normal bundle}) is equivalent to  
\begin{equation}
\label{vanishing after Serre dual}
H^{d-j+1}(X, \Sym^t({\sN^*})^{*}(-\ell)\otimes \omega_X)=0
\end{equation}
for $j\leq d$ and for $\ell<-c_1t$ (\ie $-\ell>c_1t$).

By our assumption on $I$, the normal bundle $\sN$ is a line bundle. It is well-known that the symmetric powers agree with divided powers for line bundles ({\it e.g.} \cite[Lemma 2.17]{SatoTakagi}); hence 
\[\Sym^t({\sN^*})^{*}=\Sym^t(\sN).\]
Therefore, (\ref{vanishing after Serre dual}) is equivalent to 
\[H^{d-j+1}(X, \Sym^t(\sN)(-\ell)\otimes \omega_X)=0\]
for $j\leq d$ and for $\ell<-c_1t$ (\ie $-\ell>c_1t$). This amounts to finding a linear upper bound on the maximal non-vanishing degree which follows immediately from \cite[Theorem 2.1]{DaoMontano}. This completes the proof of our theorem.
\end{proof}

As an application of our criterion Theorem \ref{main criterion}, we have the following corollary.
\begin{corollary}
\label{corollary on socle of top local cohomology}
Assume $R$ and $I$ satisfy the hypothesis in one of the following three results
\begin{enumerate}
\item Theorem \ref{Cartier divisor case}, or 
\item \cite[Theorem 1.2]{BBLSZ3}, or 
\item \cite[Corollary 2.6]{DaoMontano}.
\end{enumerate} 
Then Question \ref{main question} has a positive answer for $H^{\dim(R/I)}_{\fm}(R/I^t)$, {\it i.e.} there is a constant $c$ such that
\[\beg(\Soc(H^{\dim(R/I)}_{\fm}(R/I^t)))\geq -ct\]
for all $t\geq 1$.
\end{corollary}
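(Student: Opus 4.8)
The plan is to reduce the statement to Theorem \ref{main criterion}, whose only input is the linear lower bound (\ref{linear assumption on lower lc}) on $\beg(\Ext^i_R(k,H^j_{\fm}(R/I^t)))$ for $j<d$ and $i<d+2$, where $d:=\dim(R/I)$. The first step is to record that each of the three hypotheses supplies a linear lower bound on $\beg(H^j_{\fm}(R/I^t))$ for every $j<d$. In case (a) this is exactly the conclusion of Theorem \ref{Cartier divisor case}. In cases (b) and (c), \cite[Theorem 1.2]{BBLSZ3} and \cite[Corollary 2.6]{DaoMontano} give $H^j_{\fm}(R/I^t)_{\leq -c_0t}=0$ for $j<d$ and all $t\geq 1$, which is the same assertion. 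So in all three cases there is a constant $c_0\geq 0$, depending only on $R$ and $I$, with $\beg(H^j_{\fm}(R/I^t))\geq -c_0t$ for every $j<d$ and every $t\geq 1$.

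The second step is to promote this to a bound on the relevant $\Ext$ modules. The module $N:=H^j_{\fm}(R/I^t)$ is artinian rather than finitely generated, so Lemma \ref{beg for Ext with k} does not apply on the nose; however, its proof uses only that each term $\cF_i$ of the minimal graded free resolution of $k$ is a \emph{finitely generated} free module, so that $\Hom_R(\cF_i,N)\cong\oplus_{\ell}N(\alpha_{i\ell})$ and $\Ext^i_R(k,N)$ is a subquotient of this module. Hence the estimate $\beg(\Ext^i_R(k,N))\geq \beg(N)-\alpha_i$ holds verbatim, and the quantity $\alpha_i$ depends only on $R$ and $i$, not on $t$. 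Setting $\alpha:=\max_{0\leq i\leq d+1}\alpha_i\geq 0$ and combining with the first step, we get
\[
\beg(\Ext^i_R(k,H^j_{\fm}(R/I^t)))\ \geq\ -c_0t-\alpha\ \geq\ -(c_0+\alpha)t
\]
for all $j<d$, all $i<d+2$, and all $t\geq 1$, the last inequality because $t\geq 1$ and $\alpha\geq 0$. Thus hypothesis (\ref{linear assumption on lower lc}) of Theorem \ref{main criterion} is satisfied with $c'=c_0+\alpha$, and that theorem then yields a constant $c$ with $\beg(\Soc(H^d_{\fm}(R/I^t)))\geq -ct$ for all $t\geq 1$, which is the assertion (recall $d=\dim(R/I)$).

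There is no serious obstacle intrinsic to this argument: all the genuine content lies in Theorems \ref{main criterion} and \ref{Cartier divisor case} and in the cited vanishing results. The two points that merit a careful sentence are (i) that each of (a), (b), (c) provides the bound for \emph{every} homological degree $j$ strictly below $\dim(R/I)$ --- so that the whole range $j<d$, $i<d+2$ demanded by (\ref{linear assumption on lower lc}) is covered --- and (ii) that the resolution constants $\alpha_i$ are independent of $t$, which is precisely what makes it harmless to absorb the additive error $\alpha$ into a linear term in $t$.
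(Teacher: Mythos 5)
Your proof is correct and follows essentially the same route as the paper: in all three cases one gets $\beg(H^j_{\fm}(R/I^t))\geq -c_0t$ for $j<\dim(R/I)$, and then Lemma \ref{beg for Ext with k} converts this into the hypothesis (\ref{linear assumption on lower lc}) of Theorem \ref{main criterion}, which gives the claim. Your extra remark that Lemma \ref{beg for Ext with k} applies verbatim to the (non--finitely generated, artinian) modules $H^j_{\fm}(R/I^t)$ because its proof only uses finite generation of the terms $\cF_i$ in the resolution of $k$ is a careful point the paper passes over silently, and it is handled correctly.
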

\begin{proof}
Under any of the hypothesis, there exists a constant $c'$ such that
\[\beg(H^j_{\fm}(R/I^t))\geq -c't\]
for all $t\geq 1$ and $j<\dim(R/I)$. Consequently,
\[\beg(\Soc(H^j_{\fm}(R/I^t)))\geq \beg(H^j_{\fm}(R/I^t))\geq -c't,\]
for all $t\geq 1$ and $j<\dim(R/I)$. Combining this with Lemma \ref{beg for Ext with k}, we see that the hypothesis in Theorem \ref{main criterion} are satisfied. This completes the proof.
\end{proof}

We end this section with the following lemma which is needed in the next section.
\begin{lemma}
\label{linking ideal and quotient}
Let $R$ be a $d$-dimensional standard graded algebra over a field $k$. Assume that $H^{d-1}_{\fm}(R)$ is finitely generated. Then for each homogeneous ideal $I$ the following statements are equivalent:
\begin{enumerate}
\item there is a constant $c$ such that $\beg\Soc(H^{d-1}_{\fm}(R/I^t))\geq -ct$ for all $t\geq 1$, 
\item there is a constant $c'$ such that $\beg\Soc(H^{d}_{\fm}(I^t))\geq -c't$ for all $t\geq 1$.
\end{enumerate}
\end{lemma}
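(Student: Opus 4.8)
The plan is to link the two modules $R/I^t$ and $I^t$ by the short exact sequence $0 \to I^t \to R \to R/I^t \to 0$ and compare their local cohomology. Applying the long exact sequence for $H^\bullet_{\fm}(-)$ to this short exact sequence, and recalling that $H^{d-1}_{\fm}(R)$ is finitely generated (hence has $\beg$ bounded below by an absolute constant, say $-c_0$) and that $H^d_{\fm}(R)$ is independent of $t$, one obtains an exact sequence
\[
H^{d-1}_{\fm}(R) \to H^{d-1}_{\fm}(R/I^t) \to H^d_{\fm}(I^t) \to H^d_{\fm}(R),
\]
with all maps degree-preserving. Thus there is a submodule $K_t \subseteq H^{d-1}_{\fm}(R/I^t)$, namely the image of $H^{d-1}_{\fm}(R)$, with $\beg(K_t) \geq -c_0$, such that the quotient $H^{d-1}_{\fm}(R/I^t)/K_t$ embeds into $H^d_{\fm}(I^t)$, and dually $H^d_{\fm}(I^t)$ has a submodule isomorphic to this quotient, with cokernel a submodule of the fixed module $H^d_{\fm}(R)$.

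First I would set up the direction \textbf{(a) $\Rightarrow$ (b)}. Given $\beg\Soc(H^{d-1}_{\fm}(R/I^t)) \geq -ct$, I want to control $\beg\Soc(H^d_{\fm}(I^t))$. Apply $\Hom_R(k,-)$ to the inclusion $K_t \hookrightarrow H^{d-1}_{\fm}(R/I^t)$ and to the embedding $H^{d-1}_{\fm}(R/I^t)/K_t \hookrightarrow H^d_{\fm}(I^t)$. The point is that $\Soc$ is left exact, so $\Soc(H^{d-1}_{\fm}(R/I^t)/K_t)$ embeds into $\Soc(H^d_{\fm}(I^t))$; one must also bound the part of $\Soc(H^d_{\fm}(I^t))$ coming from the quotient $H^d_{\fm}(I^t)/(H^{d-1}_{\fm}(R/I^t)/K_t)$, which is a submodule of $H^d_{\fm}(R)$ and hence has socle with $\beg$ bounded below by a fixed constant. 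For the socle of $K_t$ and of $H^{d-1}_{\fm}(R/I^t)/K_t$ I would use the long exact sequence in $\Ext^\bullet_R(k,-)$ attached to $0 \to K_t \to H^{d-1}_{\fm}(R/I^t) \to H^{d-1}_{\fm}(R/I^t)/K_t \to 0$: the relevant connecting map lands in $\Ext^1_R(k, K_t)$, whose $\beg$ is at least $\beg(K_t) - \alpha_1 \geq -c_0 - \alpha_1$ by Lemma \ref{beg for Ext with k}, so that $\beg\Soc(H^{d-1}_{\fm}(R/I^t)/K_t) \geq \min\{-ct,\, -c_0 - \alpha_1\}$. Combining, $\beg\Soc(H^d_{\fm}(I^t)) \geq \min\{-ct,\, -c_0 - \alpha_1,\, \beg\Soc(H^d_{\fm}(R))\}$, which is linear in $t$.

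For \textbf{(b) $\Rightarrow$ (a)} I run the same bookkeeping in reverse: given the bound on $\beg\Soc(H^d_{\fm}(I^t))$, the quotient $H^{d-1}_{\fm}(R/I^t)/K_t$ is a submodule of $H^d_{\fm}(I^t)$, so $\beg\Soc$ of it is at least $\beg\Soc(H^d_{\fm}(I^t)) \geq -c't$, while $K_t$ is a submodule of the finitely generated module $H^{d-1}_{\fm}(R)$ modulo nothing — actually $K_t$ is a quotient of $H^{d-1}_{\fm}(R)$ — so $\beg(K_t) \geq \beg(H^{d-1}_{\fm}(R))$ is bounded below absolutely, hence $\beg\Soc(K_t)$ is too; then the $\Ext$-long-exact-sequence argument (connecting map into $\Ext^1_R(k, K_t)$, whose $\beg$ is controlled by Lemma \ref{beg for Ext with k}) gives $\beg\Soc(H^{d-1}_{\fm}(R/I^t)) \geq \min\{-c't,\, \beg(H^{d-1}_{\fm}(R)) - \alpha_1\}$, which is linear in $t$. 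I expect the main obstacle to be the careful handling of the socle under the non-split short exact sequences — one cannot simply intersect socles, and must instead track the one-step connecting maps in $\Ext^\bullet_R(k,-)$ and invoke Lemma \ref{beg for Ext with k} to see that the "error terms" $\Ext^1_R(k, K_t)$ contribute only a bounded (non-linear-in-$t$) shift; the finite generation of $H^{d-1}_{\fm}(R)$ is exactly what makes $\beg(K_t)$, and hence these error terms, $t$-independent.
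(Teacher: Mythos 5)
Your proposal is correct and takes essentially the same route as the paper's proof: the long exact sequence attached to $0\to I^t\to R\to R/I^t\to 0$, the splitting into the image $K_t$ of $H^{d-1}_{\fm}(R)$ (bounded below because $H^{d-1}_{\fm}(R)$ is finitely generated) and the image $M\subseteq H^{d}_{\fm}(I^t)$, with the error terms $\Ext^1_R(k,K_t)$ and $\Soc(H^{d}_{\fm}(R))$ controlled by Lemma \ref{beg for Ext with k} and artinianness. The only (harmless) difference is that you carry the bookkeeping at the level of socles throughout, where the paper phrases the intermediate condition in terms of $\beg(M)$.
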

\begin{proof}
Consider the exact sequence
\[\cdots \to H^{d-1}_{\fm}(R)\xrightarrow{f} H^{d-1}_{\fm}(R/I^t) \xrightarrow{g} H^d_{\fm}(I^t) \to H^d_{\fm}(R) \to 0\]
induced by the short exact sequence $0\to I^t\to R\to R/I^t\to 0$. Since $H^{d-1}_{\fm}(R)$ is finitely generated, there exists an integer $\ell_0$ such that $H^{d-1}_{\fm}(R)_{<\ell_0}=0$. Let $L$ denote the image of $f$ and $M$ denote the image of $g$. (Note that $M$ depends on $t$.) Since $L$ is a graded quotient of $H^{d-1}_{\fm}(R)$, we have $L_{<\ell_0}=0$. The short exact sequence $0\to L\to H^{d-1}_{\fm}(R/I^t) \to M\to 0$ induces an exact sequence
\[0\to \Soc(L)\to \Soc(H^{d-1}_{\fm}(R/I^t))\to \Soc(M)\to \Ext^1_R(k,L)\]
Lemma \ref{beg for Ext with k} implies that $\Ext^1_R(k,L)_{<\ell_1}=0$ for an integer $\ell_1$. Hence the statement (a) is equivalent to
\begin{center}
 {\it (*) there is a constant $c_1$ such that $\beg(M)\geq -c_1t$ for all $t\geq 1$.}
 \end{center} 
The short exact sequence $0\to M\to H^d_{\fm}(I^t) \to H^d_{\fm}(R) \to 0$ induces an exact sequence
\[0\to \Soc(M)\to \Soc(H^d_{\fm}(I^t)) \to \Soc(H^d_{\fm}(R))\]
Since $\Soc(H^d_{\fm}(R))$ is bounded from below by a constant (independent of $t$), it follows that the statement $(*)$ is equivalent to the statement (b). This completes the proof.
\end{proof}

\section{An application to gauge-boundedness}
\label{gauge-bounded}
Let $R$ be an $F$-finite noetherian commutative ring of characteristic $p$. The full Cartier algebra of $R$ is defined as
\[\cC^R:=\oplus_{e\geq 0}\cC^R_e,\quad {\rm where\ }\cC^R_e=\Hom_R(F^e_*R,R).\]
The investigation of singularities and test ideals in characteristic $p$ using $\cC^R$ was initiated by Schwede in \cite{SchwedeFAdjunction,SchwedeTestIdealsNonQGorenstein} and later the idea was refined by Blickle in \cite{BlickleTestIdealspInverseMaps}. The Cartier algebra $\cC^R$ comes with a non-commutative ring structure (see \cite[\S2]{BlickleTestIdealspInverseMaps}). Generally speaking, the ring structure of $\cC^R$ is rather complex. A notion of {\it gauge bounded} introduced in \cite{BlickleTestIdealspInverseMaps} can be used to control the complexity of $\cC^R$. 

\begin{remark}
Let $S=k[x_1,\dots,x_n]$ where $k$ is an $F$-finite field and let $R=S/I$. One can build a filtration of $R$ as follows\footnote{As discussed in \cite[\S4]{BlickleTestIdealspInverseMaps}, there are at least two different filtrations of $S$ one can use: one is filtered by the total degree -- the one we use in this section and in \cite{BlickleSchwedeTakagiZhang}; the other is filtered by the max degree of any variable.} . Set $R_m$ to be the finite dimensional $k$-vector space of $R$ spanned by the images of the monomials of degree at most $m$:
\[x^{e_1}_1\cdots x^{e_n}_n,\quad {\rm for\ }\sum_ie_i\leq m.\]
For each $r\in R$, one sets $\delta(r)=m$ iff $r\in R_m\backslash R_{m-1}$ and calls $\delta$ a {\it gauge} for $R$.

The full Cartier algebra $\cC^R$ is {\it gauge bounded} if there is a constant $c$ and elements $\varphi_{e,j}\in \cC^R_e$ (generating $\cC^R_e$ as a left $R$-module) such that 
\[\delta(\varphi_{e,j}(r))\leq \frac{\delta(r)}{p^e}+c,\quad {\rm for\ each\ }e\ {\rm and\ }j.\]

If $\cC^R$ is gauge bounded, then for each ideal $\fa$ of $R$, the $F$-jumping numbers of $\tau(R, \fa^t)$ are a subset of the real numbers with no limit points; in particular, they form a discrete set. This is \cite[Corollary 4.19]{BlickleTestIdealspInverseMaps}.
\end{remark}

We record the following sufficient condition for $\cC^R$ to be gauge bounded, which is most relevant to this section.

\begin{lemma}
\label{linear growth in degree implies gauge bounded}
Let $R$ be a standard graded algebra over a field $k$ of characteristic $p$. If there is a constant $c$ such that such that the maximal degree of any minimal generator of $\Hom_R(F_*^eR,R)$ is at most $c$ for all $e\geq 1$, then $\cC^R$ is gauge-bounded.
\end{lemma}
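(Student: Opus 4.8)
The plan is to verify the defining inequality of gauge-boundedness directly: take the $\varphi_{e,j}$ to be a minimal homogeneous generating set of each $\cC^R_e$, and read the inequality off from the grading; the gauge constant will come out equal to the $c$ in the hypothesis.

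First I would fix the grading. One grades $F_*^eR$ over $\tfrac1{p^e}\ZZ$ by declaring $F_*^e(x)$ to be homogeneous of degree $\deg(x)/p^e$ for homogeneous $x\in R$; this is compatible with the twisted $R$-module structure $s\cdot F_*^e(x)=F_*^e(s^{p^e}x)$, and it makes $\cC^R_e=\Hom_R(F_*^eR,R)$ a graded $R$-module in which a homogeneous $\varphi$ of degree $a$ sends $F_*^e(x)$, for homogeneous $x$, into $R_{\,\deg(x)/p^e+a}$ (this being $0$ when the subscript is not a non-negative integer). By hypothesis, for each $e$ one may choose homogeneous generators $\varphi_{e,1},\varphi_{e,2},\dots$ of $\cC^R_e$ as a left $R$-module — e.g. the minimal homogeneous generators — with $\deg\varphi_{e,j}\le c$ for all $e\ge 1$ and all $j$. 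Only this upper bound on the generator degrees will be used; a priori these degrees may be negative.

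Next I would run the degree count. For homogeneous $x\in R_m$ the element $\varphi_{e,j}(F_*^e(x))$ is either $0$ or homogeneous of degree $m/p^e+\deg\varphi_{e,j}$, so, $R$ being non-negatively graded, $\delta\bigl(\varphi_{e,j}(F_*^e(x))\bigr)\le m/p^e+\deg\varphi_{e,j}\le m/p^e+c$ in either case. For arbitrary nonzero $r\in R$, write $r=\sum_{i=0}^{m}r_i$ with $r_i\in R_i$ and $m=\delta(r)$; since $\delta(a+b)\le\max\{\delta(a),\delta(b)\}$, this gives
\[
\delta\bigl(\varphi_{e,j}(F_*^e(r))\bigr)\ \le\ \max_{0\le i\le m}\delta\bigl(\varphi_{e,j}(F_*^e(r_i))\bigr)\ \le\ \max_{0\le i\le m}\Bigl(\tfrac{i}{p^e}+c\Bigr)\ =\ \tfrac{\delta(r)}{p^e}+c ,
\]
which is precisely the gauge-bound inequality. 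So the $\varphi_{e,j}$ would witness that $\cC^R$ is gauge bounded, with the constant $c$ of the hypothesis.

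I expect no serious obstacle: the statement is essentially a dictionary between the grading on $\cC^R_e$ and the gauge $\delta$. The points that need a little care are (i) pinning down the normalization so that ``maximal degree of a minimal generator $\le c$'' really means $\deg\varphi_{e,j}\le c$ in the $\tfrac1{p^e}\ZZ$-grading above; (ii) noting that for $R$ standard graded the gauge of a nonzero homogeneous element is its degree, while the gauge of a sum is at most the largest of the summands' gauges; and (iii) observing that a single constant $c$ valid for all $e$ is exactly what the definition of gauge-boundedness demands.
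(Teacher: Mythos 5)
Your argument is correct, but it takes a different route from the paper. The paper proves this lemma by translation: using Fedder's isomorphism $\Hom_R(F^e_*R,R)\cong F^e_*\bigl((\fa^{[p^e]}:\fa)/\fa^{[p^e]}\bigr)$ (where $R=S/\fa$) it identifies the hypothesis with a bound on the degrees of minimal generators of the colon module, and then simply cites \cite[Lemma 2.2]{KatzmanZhangRegularityandFJumpingNumbers}, which is exactly the statement in that normalization. You instead unwind the definition and verify the gauge inequality directly from the $\tfrac1{p^e}\ZZ$-grading on $F^e_*R$: homogeneous minimal generators $\varphi_{e,j}$ of degree $\le c$ (in the standard, i.e.\ left, $R$-module structure on $\Hom_R(F^e_*R,R)$, which is indeed the structure the paper uses when it speaks of minimal generators and which matches the left-generation requirement in its definition of gauge-boundedness) send a degree-$m/p^e$ element into degree $m/p^e+\deg\varphi_{e,j}$, and for a general $r$ you decompose into homogeneous components and use that $\delta$ of a homogeneous element is its degree and $\delta$ of a sum is at most the maximum; your observation that possibly negative generator degrees only help is also right. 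What each approach buys: the paper's proof is a two-line reduction that ties the statement to the established Fedder-side criterion (and makes the comparison of normalizations automatic), while yours is self-contained, produces the gauge constant equal to the hypothesized $c$, and makes transparent that the lemma is purely a dictionary between the grading on $\cC^R_e$ and the gauge $\delta$. The only point worth flagging is the implicit $F$-finiteness of $k$ (as in the rest of \S4): it is needed so that $F^e_*R$ is a finitely generated graded $R$-module, hence $\Hom_R(F^e_*R,R)$ is graded with homogeneous minimal generators, which both the statement's hypothesis and your choice of the $\varphi_{e,j}$ presuppose.
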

\begin{proof}
Write $R=S/\fa$ where $S$ is a polynomial ring and $\fa$ is a homogeneous ideal. Then \cite[Lemma 1.6]{FedderFPurity} says that
\[\Hom_R(F^e_*R,R)\cong F^e_*(\frac{(\fa^{[p^e]}:\fa)}{\fa^{[p^e]}}).\]
Therefore our lemma is equivalent to \cite[Lemma 2.2]{KatzmanZhangRegularityandFJumpingNumbers}. 
\end{proof}

Let $R$ be a standard graded integral domain over an $F$-finite field $k$ of characteristic $p$. By Proposition \ref{graded canonical ideal}, $R$ admits a graded canonical ideal $\omega$ such that a degree-shift $\omega(\beg(\omega)+a(R))$ is the graded canonical module. To ease our notation, we will set $a:=\beg(\omega)+a(R)$, {\it i.e.} $\omega(a)$ is the graded canonical module of $R$.

The following theorem is crucial to our application to gauge-boundedness.
\begin{theorem}
\label{gauge same as least socle degree}
Let $R$ be a standard graded integral domain over an $F$-finite field $k$ of characteristic $p$ and $\omega$ be the canonical ideal. Assume that $R$ satisfies $(S_2)$ condition. Then there exists a constant $c$ such that the maximal degree of any minimal generator of $\Hom_R(F_*^eR,R)$ is at most $c$ for all $e\geq 1$ if and only if  
\[\beg(\Soc(H^d_{\fm}(\omega^{[p^e]})))\geq -(c-a)p^e\] 
for all $e\geq 1$, where $d=\dim(R)$.
\end{theorem}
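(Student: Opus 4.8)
The plan is to relate the minimal generators of $\Hom_R(F^e_*R,R)$ to the socle of a top local cohomology module via graded Matlis duality, and then to chase degrees through the shift by $a$ that appears when we pass from $\omega$ to the canonical module $\omega(a)$. First I would rewrite $\Hom_R(F^e_*R,R)$ in a form to which the duality results of \S\ref{canonical ideal and matlis dual} apply. Using that $R$ is a domain satisfying $(S_2)$, Proposition \ref{graded canonical ideal} gives $\Omega_R\cong\omega(a)$ and $\Hom_R(\omega,\omega)\cong R$ (degree-preservingly). Since $F^e_*R$ is a finitely generated graded $R$-module (here $k$ is $F$-finite), one has $\Hom_R(F^e_*R,R)\cong\Hom_R(F^e_*R,\Hom_R(\omega,\omega))\cong\Hom_R(F^e_*R\otimes_R\omega,\omega)$, and after the standard manipulation with Frobenius pushforward this becomes $F^e_*\Hom_R(\omega^{[p^e]},\omega)$ up to the appropriate Frobenius twist on the grading; equivalently, up to a shift, $\Hom_R(F^e_*R,R)$ is $F^e_*$ of the reflexive dual of $\omega^{[p^e]}$ with respect to $\omega$. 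The key point is that the minimal \emph{generators} of this module correspond, under graded Matlis duality, to the \emph{socle} of its dual.

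Next I would invoke Corollary \ref{minimal injective resolution by Matlis dual} (the graded case): a graded minimal free resolution of a finitely generated module dualizes to a graded minimal injective resolution of its Matlis dual, and in particular the generator degrees of a module $N$ match the socle degrees of $N^\vee$ with signs reversed. Concretely, $\Hom_R(\omega^{[p^e]},\omega)$ is, up to a shift by $a$ coming from $\Omega_R=\omega(a)$, the same as $\Hom_R(\omega^{[p^e]},\Omega_R)$, whose graded Matlis dual is $H^d_{\fm}(\omega^{[p^e]})$ by graded local duality. Therefore the maximal generator degree of $\Hom_R(\omega^{[p^e]},\Omega_R)$ equals $-\beg(\Soc(H^d_{\fm}(\omega^{[p^e]})))$, and shifting by $a$ converts this into a statement about $\Hom_R(\omega^{[p^e]},\omega)$, hence about $\Hom_R(F^e_*R,R)$ after untwisting the Frobenius grading (which is where the $p^e$ scaling in $-(c-a)p^e$ enters). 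Carefully tracking the Frobenius twist — whether a generator of $F^e_*M$ in internal degree $\ell$ corresponds to degree $\ell$ or degree $\ell/p^e$, and how the convention for the grading on $F^e_*R$ interacts with $\omega^{[p^e]}$ living inside $R^{[p^e]}$ — is the bookkeeping that makes the constants come out as stated.

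I expect the main obstacle to be precisely this degree/twist bookkeeping, together with one genuine structural point: that $F^e_*R\otimes_R\omega$ (or the relevant reflexive hull) is well-behaved enough that its $R$-dual is the reflexive dual of $\omega^{[p^e]}$ and that this dual has the \emph{same} minimal number of generators and the same top generator degree as $\Hom_R(\omega^{[p^e]},\Omega_R)$. The $(S_2)$ hypothesis is what guarantees $\Hom_R(\omega,\omega)=R$ and that the relevant reflexive modules behave correctly in codimension $\le 1$, so reflexive duals can be computed on the locally free locus and then extended; I would spell this out using that $\omega$ and $\omega^{[p^e]}$ are rank-one reflexive (since $R$ is a domain and $\omega$ is a height-one ideal, or else principal in codimension one). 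Once the isomorphism $\Hom_R(F^e_*R,R)\cong F^e_*\Hom_R(\omega^{[p^e]},\omega)$ is established with explicit grading, the equivalence in the theorem is a formal consequence of Corollary \ref{minimal injective resolution by Matlis dual} and graded local duality, and the two directions of the ``if and only if'' are immediate from the equality of the two numerical quantities.
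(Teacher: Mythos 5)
Your overall strategy is the one the paper uses: bring $\omega$ in via $\Hom_R(\omega,\omega)\cong R$ (Proposition \ref{graded canonical ideal}) and adjunction, convert minimal generator degrees into socle degrees of the graded Matlis dual via Proposition \ref{Matlis dual gives essential extension}/Corollary \ref{minimal injective resolution by Matlis dual}, and identify that dual with a top local cohomology module by graded local duality. The gap lies in where you perform the Frobenius untwisting. You reduce to the module $\Hom_R(\omega^{[p^e]},\Omega_R)$ and claim that the maximal generator degree of $\Hom_R(F^e_*R,R)\cong F^e_*\Hom_R(\omega^{[p^e]},\omega)$ is obtained from that of $\Hom_R(\omega^{[p^e]},\omega)$ by dividing by $p^e$ (plus the shift by $a$), and that both directions of the equivalence are ``immediate from the equality of the two numerical quantities.'' That equality is false: minimal generators of $F^e_*N$ as an $R$-module correspond to a basis of $F^e_*(N/\fm^{[p^e]}N)$, not of $F^e_*(N/\fm N)$, so the top generator degree of $F^e_*N$ is $\tfrac{1}{p^e}\en(N/\fm^{[p^e]}N)$, which can exceed $\tfrac{1}{p^e}$ times the top generator degree of $N$ by as much as $\tfrac{1}{p^e}\en(R/\fm^{[p^e]})$ (already visible for a polynomial ring, where $\omega=R$ and $F^e_*R$ has generators in many degrees while $R$ has one). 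This discrepancy is bounded independently of $e$, so the qualitative ``there exists a constant $c$'' equivalence can be salvaged, but only with an extra bounded-error argument that you do not supply; in particular the constants do not ``come out as stated'' along your route. A second, smaller issue is that the isomorphism $\Hom_R(F^e_*R,R)\cong F^e_*\Hom_R(\omega^{[p^e]},\omega)$ with its grading needs graded duality for the finite Frobenius, $\Hom_R(F^e_*M,\Omega_R)\cong F^e_*\Hom_R(M,\Omega_R)$, together with the vanishing of $\Hom_R(\Tor_1^R(F^e_*R,R/\omega),\omega)$; you only gesture at this as a ``standard manipulation.''

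The paper's proof is organized precisely so as to avoid comparing generator degrees across $F^e_*$: it applies Proposition \ref{Matlis dual gives essential extension} directly to a minimal free cover of $\Hom_R(F^e_*R,R)$ itself, so the generator degrees $\alpha_{ej}$ appearing in the statement enter exactly; local duality identifies the Matlis dual of $\Hom_R(F^e_*R\otimes_R\omega,\omega(a))$ with $H^d_{\fm}(F^e_*R\otimes_R\omega)$, and the passage to $\omega^{[p^e]}$ happens only after taking $H^d_{\fm}$, using that the kernel $\Tor_1^R(F^e_*R,R/\omega)$ of $F^e_*R\otimes_R\omega\twoheadrightarrow F^e_*\omega^{[p^e]}$ is killed by $\omega$ and so has dimension $<d$. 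If you want to keep your module-level identification, you must prove the graded Frobenius-duality isomorphism and replace your asserted equality of generator degrees by two-sided estimates with an error term bounded independently of $e$.
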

\begin{proof}
We have degree-preserving isomorphisms:
\[\Hom_R(F^e_*R,R)\cong \Hom_R(F_*^eR,\Hom_R(\omega,\omega))\cong \Hom_R(F^e_*R\otimes_R\omega,\omega).\] 
Let $S$ denote the polynomial ring $k[x_1,\dots,x_n]$. Under our hypothesis, $F^e_*R\otimes_RJ$ is finitely generated. Let $ \cF_{\bullet}\to \Hom_R(F^e_*R,R)\to 0$ be the minimal graded resolution of $\Hom_R(F^e_*R,R)$. By Remark \ref{Matlis dual gives essential extension}, $\Hom_R(F^e_*R,R)^{\vee}\to \cF_{0}^{\vee}$ is an essential extension. Set $d=\dim(R)$. Then we have degree-preserving isomorphisms:
\begin{align*}
H^d_{\fm}(F^e_*R\otimes_R\omega)&\cong \Hom_R(F^e_*R\otimes_R\omega,\omega(a))^{\vee}\\
&\cong \Hom_R(F^e_*R\otimes_R\omega,\omega)^{\vee}(-a)\\
&\cong \Hom_R(F^e_*R, \Hom_R(\omega,\omega))^{\vee}(-a)\\
&\cong \Hom_R(F^e_*R,R)^{\vee}(-a)
\end{align*}
Therefore the resulted degree-preserving $R$-linear map $H^d_{\fm}(F^e_*R\otimes_R\omega)\to \cF_{0}^{\vee}(-a)$ is an essential extension. Write $\cF_0=\oplus_{j=1}^{\mu_e} R(-\alpha_{ej})$ where $\mu_e$ is the minimal number of generators of $\Hom_R(F^e_*R,R)$. Then $\alpha:=\max\{\alpha_{ej}\}$ is the maximal degree of a minimal generator of $\Hom_R(F^e_*R,R)$. With this presentation of $\cF_0$, it is evident that 
\[\cF_{0}^{\vee}=\Hom_R(\cF_{0},\E)\cong \oplus_{j=1}^{\mu_e} \E(\alpha_{ej}).\]
Consequently, the degree-preserving $R$-linear map $H^d_{\fm}(F^e_*R\otimes_R\omega)\to \oplus_{j=1}^{\mu_e}\E(\alpha_{ej}-a)$ is an essential extension. Since both $H^d_{\fm}(F^e_*R\otimes_R\omega)$ and $\E$ are artinian, we must have
\[\Soc(H^d_{\fm}(F^e_*R\otimes_R\omega))=\Soc(\oplus_{j=1}^{\mu_e}\E(\alpha_{ej}-a))=\oplus_{j=1}^{\mu_e}k(\alpha_{ej}-a)\]
where we use the fact that $\Soc(\E)=k$ and it lives in degree 0. Therefore,
\[\beg(\Soc(H^d_{\fm}(F^e_*R\otimes_R\omega)))=\min \{-\alpha_{ej}+a\}=-\max\{\alpha_{ej}-a\}.\]
The short exact sequence $0\to \omega\to R\to R/\omega\to 0$ induces an exact sequence $0\to \Tor_1(F^e_*R, R/\omega)\to F^e_*R\otimes_R\omega\to F^e_*\omega^{[p^e]}\to 0$ which in turn induces an exact sequence of local cohomology:
\[H^{d}_{\fm}(\Tor_1(F^e_*R, R/\omega))\to H^d_{\fm}(F^e_*R\otimes_R\omega)\to H^d_{\fm}(F^e_*\omega^{[p^e]})\to 0\]
Since $\Tor_1(F^e_*R, R/\omega)$ is killed by $\omega$, its dimension is less than $d$. Thus, $H^{d}_{\fm}(\Tor_1(F^e_*R, R/\omega))=0$. This implies $H^d_{\fm}(F^e_*R\otimes_R\omega)\xrightarrow{\sim} H^d_{\fm}(F^e_*\omega^{[p^e]})$. Therefore
\[\beg(\Soc(H^d_{\fm}(\omega^{[p^e]})))=p^e \beg(\Soc(H^d_{\fm}(F^e_*\omega^{[p^e]})))=p^e(-\max\{\alpha_{ej}-a\}).\]
This shows that there is a constant $c$ such that $\max\{\alpha_{ej}\}\leq c$ for all $e\geq 1$ if and only if $\beg(\Soc(H^d_{\fm}(\omega^{[p^e]})))\geq -(c-a)p^e$ for all $e\geq 1$.
\end{proof}

\begin{theorem}
\label{thm: linear bound on socle degree}
Let $R$ be a $d$-dimensional standard graded integral domain over an $F$-finite field $k$ of characteristic $p$ and let $\omega$ denote its graded canonical ideal. Assume that $R$ satisfies $(S_2)$ condition and that $R_{\fp}$ is Gorenstein for each non-maximal prime ideal $\fp$. Then there exists a constant $c$ such that 
\begin{equation}
\label{linear bound for socle of Frobenius power of canonical}
\beg(\Soc(H^d_{\fm}(\omega^{[p^e]})))\geq -cp^e
\end{equation}
for all $e\geq 1$.
\end{theorem}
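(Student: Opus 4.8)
The plan is to apply the criterion of Theorem~\ref{main criterion} and the linking Lemma~\ref{linking ideal and quotient} to the canonical ideal $\omega$, producing a lower bound of the form $\beg(\Soc(H^d_{\fm}(\omega^t)))\ge -ct$ for the ordinary powers, and then to transfer this bound from $\omega^{p^e}$ to the Frobenius power $\omega^{[p^e]}$. We may assume $R$ is not Gorenstein, since otherwise $\omega=R$ and $\beg(\Soc(H^d_{\fm}(\omega^{[p^e]})))=\beg(\Soc(H^d_{\fm}(R)))$ is a fixed constant; and we may assume $d\ge 2$, since for $d=1$ the sequence $0\to\omega^{[p^e]}\to R\to R/\omega^{[p^e]}\to 0$ exhibits $H^1_{\fm}(\omega^{[p^e]})$ as an extension of the fixed artinian module $H^1_{\fm}(R)$ by $R/\omega^{[p^e]}$, which begins in degree $0$, so $\beg(\Soc(H^1_{\fm}(\omega^{[p^e]})))$ is bounded below independently of $e$.

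Under these reductions $\omega$ is a proper homogeneous ideal, and I would first verify that $(R,\omega)$ satisfies the hypotheses of Theorem~\ref{Cartier divisor case}. The depth estimate $\depth\Omega_R\ge\min\{2,d\}=2$, fed into the exact sequence $0\to\omega\to R\to R/\omega\to 0$, shows that $\omega$ is not $\fm$-primary; then, since $R_{\fp}$ is Gorenstein and hence has a free rank-one canonical module for every non-maximal prime $\fp$, every minimal prime of $\omega$ has height one, so $\hgt\omega=1$ and $R/\omega$ is equi-dimensional of dimension $d-1$. For the same reason $\omega_{\fp}\cong(\Omega_R)_{\fp}\cong\Omega_{R_{\fp}}$ is free of rank one, hence generated by a non-zero-divisor, for every non-maximal $\fp$. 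Consequently Corollary~\ref{corollary on socle of top local cohomology} applies to $I=\omega$, and since $\dim(R/\omega)=d-1$ it gives a constant $c_1$ with $\beg(\Soc(H^{d-1}_{\fm}(R/\omega^t)))\ge -c_1t$ for all $t\ge 1$. Moreover $R$ is Cohen--Macaulay on its punctured spectrum, so $H^{d-1}_{\fm}(R)$ has finite length, and Lemma~\ref{linking ideal and quotient} then yields a constant $c_2$ with
\[\beg(\Soc(H^d_{\fm}(\omega^t)))\ge -c_2 t\qquad\text{for all }t\ge 1.\]

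To pass to $\omega^{[p^e]}$, note that $f^{p^e}\in\omega^{p^e}$ for every $f\in\omega$ gives a degree-preserving inclusion $\omega^{[p^e]}\subseteq\omega^{p^e}$; set $Q_e:=\omega^{p^e}/\omega^{[p^e]}$. Frobenius powers commute with localization, and wherever $\omega$ is locally principal, say $\omega_{\fp}=(f)$, one has $(\omega^{[p^e]})_{\fp}=(f^{p^e})=(\omega^{p^e})_{\fp}$; so $Q_e$ is supported on the locus where $\omega$ is not locally principal. That locus avoids every prime of height at most one --- trivially not the zero ideal, and not a height-one prime because it is non-maximal and $R_{\fp}$ is Gorenstein --- so $\dim Q_e\le d-2$, hence $H^{d-1}_{\fm}(Q_e)=H^d_{\fm}(Q_e)=0$. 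The degree-preserving long exact sequence of local cohomology for $0\to\omega^{[p^e]}\to\omega^{p^e}\to Q_e\to 0$ therefore gives a degree-preserving isomorphism $H^d_{\fm}(\omega^{[p^e]})\cong H^d_{\fm}(\omega^{p^e})$, and applying the displayed bound with $t=p^e$ we get
\[\beg(\Soc(H^d_{\fm}(\omega^{[p^e]})))=\beg(\Soc(H^d_{\fm}(\omega^{p^e})))\ge -c_2\,p^e,\]
so $c=c_2$ does the job.

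The main obstacle, I expect, is not the cohomological transfer --- which is formal once one has the support estimate $\dim Q_e\le d-2$ --- but the preliminary bookkeeping that the canonical ideal genuinely fulfils the hypotheses of Theorem~\ref{Cartier divisor case}, especially pinning down $\hgt\omega=1$ and the equi-dimensionality of $R/\omega$ from the $(S_2)$ and punctured-spectrum-Gorenstein assumptions.
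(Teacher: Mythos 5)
Your proposal is correct and follows essentially the same route as the paper: reduce $\omega^{[p^e]}$ to $\omega^{p^e}$ in top local cohomology using that $\omega$ is locally principal off $\fm$ (Gorenstein punctured spectrum), then bound $\beg(\Soc(H^d_{\fm}(\omega^{t})))$ by combining Corollary \ref{corollary on socle of top local cohomology} (via Theorem \ref{Cartier divisor case} applied to $I=\omega$) with Lemma \ref{linking ideal and quotient}, using that $R$ is generalized Cohen--Macaulay so $H^{d-1}_{\fm}(R)$ is finitely generated. The only difference is that you carry out the steps in the opposite order and spell out the verification (height one, equi-dimensionality, local principality, and the trivial cases $d=1$ and $\omega$ principal) that the paper leaves implicit.
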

\begin{proof}
Since $R_{\fp}$ is Gorenstein for each non-maximal prime ideal $\fp$, we have $\omega_{\fp}$ is a principal ideal for each non-maximal prime ideal $\fp$. It follows that $\omega^{[p^e]}_{\fp}=\omega^{p^e}_{\fp}$ for each non-maximal prime ideal $\fp$. Therefore $\dim(\omega^{p^e}/\omega^{[p^e]})=0$. Hence $H^d_{\fm}(\omega^{p^e})=H^d_{\fm}(\omega^{[p^e]})$. Hence (\ref{linear bound for socle of Frobenius power of canonical}) is equivalent to 
\begin{equation}
\label{linear bound for socle of power of canonical}
\beg(\Soc(H^d_{\fm}(\omega^{p^e})))\geq -cp^e
\end{equation}
for all $e\geq 1$. 

Since $R_{\fp}$ is Gorenstein for each non-maximal prime ideal $\fp$, it follows $R$ is generalized Cohen-Macaulay and hence $H^{d-1}_{\fm}(R)$ is finitely generated. A combination of Lemma \ref{linking ideal and quotient} and Corollary \ref{corollary on socle of top local cohomology} completes the proof. 
\end{proof}

The following theorem now follows immediately from Lemma \ref{linear growth in degree implies gauge bounded}, Theorem \ref{thm: linear bound on socle degree} and Theorem \ref{gauge same as least socle degree}.
\begin{theorem}
\label{isolated Gorenstein gauge-bounded}
Let $R$ be a $d$-dimensional standard graded integral domain over an $F$-finite field $k$ of characteristic $p$. Assume that $R$ satisfies $(S_2)$ condition and that $R_{\fp}$ is Gorenstein for each non-maximal prime ideal $\fp$. Then $\cC^R$ is gauge bounded.
\end{theorem}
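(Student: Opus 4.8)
The statement to prove is Theorem~\ref{isolated Gorenstein gauge-bounded}, which asserts that for a $d$-dimensional standard graded integral domain $R$ over an $F$-finite field of characteristic $p$, satisfying $(S_2)$ and with $R_{\fp}$ Gorenstein at every non-maximal prime, the full Cartier algebra $\cC^R$ is gauge bounded. The strategy is to assemble the three main results already established in the excerpt into a short chain of implications, so that the proof reduces to citing them in the correct order. The end goal is to verify the hypothesis of Lemma~\ref{linear growth in degree implies gauge bounded}: namely, that there is a constant $c$ so that the maximal degree of a minimal generator of $\Hom_R(F^e_*R,R)$ is at most $c$ for all $e\geq 1$. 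Once that is in hand, gauge-boundedness follows immediately from that lemma.

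\textbf{Key steps, in order.} First I would invoke Theorem~\ref{thm: linear bound on socle degree}: under exactly the hypotheses of the present theorem (the $(S_2)$ condition and $R_{\fp}$ Gorenstein for all non-maximal $\fp$), there is a constant $c_0$ with $\beg(\Soc(H^d_{\fm}(\omega^{[p^e]})))\geq -c_0 p^e$ for all $e\geq 1$, where $\omega$ is the graded canonical ideal furnished by Proposition~\ref{graded canonical ideal} (note the domain and $(S_2)$ hypotheses are precisely what make that proposition applicable, so $\omega$ and the integer $a=\beg(\omega)+a(R)$ are well defined). Second, I would feed this linear lower bound into the equivalence of Theorem~\ref{gauge same as least socle degree}: taking $c := c_0 + a$ there, the inequality $\beg(\Soc(H^d_{\fm}(\omega^{[p^e]})))\geq -(c-a)p^e = -c_0 p^e$ holds for all $e\geq 1$, hence by that theorem the maximal degree of any minimal generator of $\Hom_R(F^e_*R,R)$ is at most $c$ for all $e\geq 1$. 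Third and finally, I would apply Lemma~\ref{linear growth in degree implies gauge bounded} to this uniform degree bound to conclude $\cC^R$ is gauge bounded. This completes the proof.

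\textbf{Where the real content lies.} None of the three steps above is itself the obstacle — each is a one-line citation of an already-proved result — so the genuine difficulty has been pushed back into the earlier machinery, and it is worth being explicit about which link carries the weight. The heart of the matter is Theorem~\ref{thm: linear bound on socle degree}, whose proof in turn rests on Corollary~\ref{corollary on socle of top local cohomology} (applied via the $(S_2)$/Gorenstein-in-codimension-zero hypothesis, which forces $R$ to be generalized Cohen–Macaulay so that $H^{d-1}_{\fm}(R)$ is finitely generated and Lemma~\ref{linking ideal and quotient} applies), and that corollary in turn rests on Theorem~\ref{main criterion} together with the asymptotic vanishing Theorem~\ref{Cartier divisor case}. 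So although the proof of Theorem~\ref{isolated Gorenstein gauge-bounded} proper is a three-line synthesis, the substantive obstacle — a linear lower bound on $\beg(\Soc(H^d_{\fm}(R/I^t)))$ for the relevant ideal $I$, here $I=\omega$ — is discharged upstream. One subtlety to flag in the write-up is that the passage from Frobenius powers $\omega^{[p^e]}$ to ordinary powers $\omega^{p^e}$ requires $\omega_{\fp}$ to be principal at non-maximal primes, which is exactly where the Gorenstein-in-codimension hypothesis is used a second time; this is handled inside Theorem~\ref{thm: linear bound on socle degree} and need not be repeated here.

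\begin{proof}
By Proposition~\ref{graded canonical ideal}, $R$ admits a graded canonical ideal $\omega$, and we set $a=\beg(\omega)+a(R)$ so that $\omega(a)$ is the graded canonical module of $R$. By Theorem~\ref{thm: linear bound on socle degree}, there exists a constant $c_0$ such that
\[\beg(\Soc(H^d_{\fm}(\omega^{[p^e]})))\geq -c_0 p^e\]
for all $e\geq 1$. Set $c:=c_0+a$. Then $\beg(\Soc(H^d_{\fm}(\omega^{[p^e]})))\geq -(c-a)p^e$ for all $e\geq 1$, so by Theorem~\ref{gauge same as least socle degree} the maximal degree of any minimal generator of $\Hom_R(F^e_*R,R)$ is at most $c$ for all $e\geq 1$. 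Lemma~\ref{linear growth in degree implies gauge bounded} now implies that $\cC^R$ is gauge bounded.
\end{proof}
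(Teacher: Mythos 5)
Your proposal is correct and matches the paper's own argument, which likewise deduces the theorem immediately by combining Theorem~\ref{thm: linear bound on socle degree}, Theorem~\ref{gauge same as least socle degree}, and Lemma~\ref{linear growth in degree implies gauge bounded}. Your explicit bookkeeping with $c=c_0+a$ is a correct (and slightly more careful) spelling-out of the same synthesis.
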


\subsection*{Acknowledgement} The author thanks Linquan Ma for pointing out a reference on canonical ideals and Mordechai Katzman for comments on a preliminary draft. This article is partially motivated by \cite{BBLSZ3}, a joint paper with Bhargav Bhatt, Manuel Blickle, Gennady Lyubeznik, and Anurag Singh. The author is grateful to them for the collaborations and mathematical discussions over the years. The author also wants to thank the referee for useful suggestions.

\bibliographystyle{skalpha}
\bibliography{lineargrowth}

\end{document}